\begin{document}

\newtheorem{definition}{Definition}
\newtheorem{theorem}{Theorem}[section]
\newtheorem{proposition}[theorem]{Proposition}
\newtheorem{lemma}[theorem]{Lemma}
\newtheorem{corollary}[theorem]{Corollary}
\newtheorem{question}[theorem]{Question}
\newtheorem{remark}[theorem]{Remark}
\newtheorem{example}[theorem]{Example}
\newtheorem{conjecture}[theorem]{Conjecture}

\newtheorem{correction}{Correction}

\def\A{{\mathbb{A}}}
\def\C{{\mathbb{C}}}
\def\G{{\mathbb{G}}}

\def\L{{\mathbb{L}}}
\def\O{{\mathcal{O}}}
\def\P{{\mathbb{P}}}
\def\Q{{\mathbb{Q}}}
\def\R{{\mathbb{R}}}
\def\T{{\mathcal{T}}}
\def\Z{{\mathbb{Z}}}
\def\Ch{{\rm Ch}}
\def\p{{\mathbf{p}}}

\def\sp{{\rm SP}}
\def\rank{{\rm rank}}

\def\mC{{\mathcal{C}}}
\def\cZ{{\mathcal{Z}}}
\def\D{{\mathcal{D}}}

\title{Lawson Homology for projective varieties with $\C^*$-action}
\author{Wenchuan Hu}
\date{November 1, 2009}

\keywords{Lawson homology, $\C^*$-action, Toric variety}

\address{
Institute for Advanced Study\\
Einstein Drive\\
Princeton, N.J. 08540}
\email{wenchuan@math.ias.edu}

\thanks{This material is based upon work supported by
the NSF under agreement No. DMS-0635607.}

\begin{abstract}
The Lawson homology of a smooth projective variety with a $\C^*$-action is given 
in terms of that of the fixed point set of this action. 
We also consider such a  decomposition for the Lawson homology of certain singular projective 
varieties with  a $\C^*$-action. As applications, we calculate the Lawson homology and higher 
Chow groups for several examples.
\end{abstract}

\maketitle

\tableofcontents

\section{Introduction}
Let $X$ be a smooth complex projective variety of dimension $n$ with a holomorphic $\C^*$-action.  
Let  $\varphi_t:X\to X$ be the flow corresponding to the $\C^*$-action.

Note that for a given $\C^*$-action, the flow $\varphi_t$ can be decomposed into an angular ${\rm S}^1$ and  a radial flow.
Averaging a K\"{a}hler metric over ${\rm S}^1$ and applying an argument of Frankel \cite{Frankel}, we find a function
$f : X\to\R$ of Bott-Morse type  whose gradient generates the radial action. Recall that a Bott-Morse  is a
real value smooth function whose critical point set is a closed (real) submanifold and whose Hessian is
non-degenerate in the normal direction.

The fixed point set $F$ of the action is assumed to be nontrivial. Let $F_1,..., F_{\nu}$ be the connected components of $F$ and set $\lambda_j:=n-\frac{1}{2}I_j-\dim_{\C}F_j$, where $I_j$ denote the index of $f$ on $F_j$. Note that $I_j$ is always an even number.

Let ${\cZ}_r(X)$ be the space of all algebraic $r$-cycles on $X$.
The \textbf{Lawson homology} $L_rH_k(X)$ of
$r$-cycles is defined by
$$L_rH_k(X) := \pi_{k-2r}({\cZ}_r(X))~ \hbox{for $k\geq 2r\geq 0$},$$
where ${\cZ}_r(X)$ is provided with a natural topology so that
it is an abelian topological group (cf. \cite{Friedlander1},
\cite{Lawson1} and \cite{Lawson2}). For general background, the reader is referred to
\cite{Lawson2}. For $r<0$, set $L_rH_k(X):=H_k(X,\Z)$ and $L_rH_k(X)_{\Q}:=L_rH_k(X)\otimes \Q$.

The first result in this note is the following Lawson Homology Basis Formula:
\begin{theorem}\label{Th1.1}
Let $X$ be a smooth complex projective variety of dimension $n$ with a holomorphic $\C^*$-action and let
$F_j$ and $\lambda_j$  be as above.  There are  isomorphisms of Lawson homology groups
\begin{equation}\label{eq1}
\bigoplus_{j=1}^{\nu} L_{r-\lambda_j}H_{k-2\lambda_j}(F_j)\cong L_{r}H_{k}(X)
\end{equation}
  for all $k\geq 2r\geq 0$.
\end{theorem}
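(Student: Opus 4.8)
\emph{Strategy.} I would transport the Bialynicki--Birula decomposition to the level of cycle spaces. The inputs on Lawson homology I rely on are: (i) the Lima-Filho localization fibration, which for a closed subvariety $Z\subseteq W$ with open complement $U$ gives a long exact sequence $\cdots\to L_rH_k(Z)\to L_rH_k(W)\to L_rH_k(U)\xrightarrow{\partial}L_rH_{k-1}(Z)\to\cdots$; and (ii) homotopy invariance: a Zariski-locally trivial affine bundle $p:E\to Y$ of rank $d$ induces, by flat pullback, a weak equivalence $\cZ_{r-d}(Y)\xrightarrow{\sim}\cZ_r(E)$, so $L_rH_k(E)\cong L_{r-d}H_{k-2d}(Y)$. (Both persist for $r<0$ with the stated convention, via Dold--Thom and classical Borel--Moore homology.)

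\emph{The stratification and the reduction.} Since the radial part of the $\C^{*}$-action is algebraic, the Bialynicki--Birula theorem applies to the cells $X_j^{+}=\{x:\lim_{t\to 0}\varphi_t(x)\in F_j\}$. Relabel the $F_j$ so that $\overline{X_m^{+}}\subseteq X_1^{+}\cup\cdots\cup X_m^{+}=:X_{(m)}$ for all $m$ (possible by the usual partial order on strata, compatibly with the order of the critical values of $f$). Then $\emptyset=X_{(0)}\subset X_{(1)}\subset\cdots\subset X_{(\nu)}=X$ is a filtration by closed subvarieties with $X_m^{+}$ open in $X_{(m)}$, and the contraction $\pi_m:X_m^{+}\to F_m$, $x\mapsto\lim_{t\to0}\varphi_t(x)$, realises $X_m^{+}$ as a Zariski-locally trivial affine bundle of rank $\lambda_m$ over the (smooth) fixed component $F_m$; here $\lambda_m=n-\tfrac{1}{2}I_m-\dim_\C F_m$ is exactly the complex rank of the positive-weight part of the normal bundle of $F_m$. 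Localization applied to $X_{(m-1)}\subset X_{(m)}$, together with (ii), yields a long exact sequence relating $L_\bullet H_\bullet(X_{(m-1)})$, $L_\bullet H_\bullet(X_{(m)})$ and $L_{\bullet-\lambda_m}H_{\bullet-2\lambda_m}(F_m)$. If each such sequence degenerates into split short exact sequences
\[
0\to L_rH_k(X_{(m-1)})\to L_rH_k(X_{(m)})\to L_{r-\lambda_m}H_{k-2\lambda_m}(F_m)\to 0,
\]
then induction on $m$ (starting from $X_{(1)}=X_1^{+}$, covered directly by (ii)) gives $L_rH_k(X)\cong\bigoplus_{m}L_{r-\lambda_m}H_{k-2\lambda_m}(F_m)$, which is \eqref{eq1}.

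\emph{Splitting via the $\C^{*}$-action.} To split the $m$-th sequence it suffices to give a section of $L_rH_k(X_{(m)})\to L_rH_k(X_m^{+})$, and I would build one from the contraction itself. Let $\overline{\Gamma}_m\subseteq X\times F_m$ be the closure of the graph of $\pi_m$; it is a variety of dimension $\dim_\C F_m+\lambda_m$, the first projection $p_m:\overline{\Gamma}_m\to X$ is proper with image $\overline{X_m^{+}}\subseteq X_{(m)}$ and an isomorphism over $X_m^{+}$, and the second projection $q_m:\overline{\Gamma}_m\to F_m$ restricts to $\pi_m$ over $X_m^{+}$. Since $X$, $F_m$ and $X\times F_m$ are smooth, $[\overline{\Gamma}_m]$ has a morphic-cohomology class; capping with it after flat pullback along $X\times F_m\to F_m$ and then pushing forward along $X\times F_m\to X$ produces a map $L_{r-\lambda_m}H_{k-2\lambda_m}(F_m)\to L_rH_k(X)$ whose image lies in that of $L_rH_k(X_{(m)})$ (the correspondence is supported on $\overline{X_m^{+}}$) and whose restriction to $X_m^{+}$ equals the flat pullback $\pi_m^{*}$, because there $\overline{\Gamma}_m$ is the honest graph of $\pi_m$. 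Composing with $(\pi_m^{*})^{-1}$ gives the section, so $\partial=0$.

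\emph{Main obstacle.} The hard part is precisely the last step: localization only manufactures long exact sequences, and one must exhibit the degeneration geometrically. Making the correspondence $p_{m*}\,q_m^{*}$ (equivalently, the cap product with $[\overline{\Gamma}_m]$) rigorous is delicate because the intermediate varieties $X_{(m)}$ and the images $\overline{X_m^{+}}$ are singular in general, so one cannot simply invoke the Gysin/intersection formalism for smooth varieties: either $q_m$ must be rendered flat (if necessary after a Gruson--Raynaud flattening, and one must track how this interacts with $\pi_m^{*}$) so that $q_m^{*}$ is defined on cycle spaces before applying $p_{m*}$, or the cap product must be carried out on the smooth ambient $X\times F_m$ with careful bookkeeping of supports through localization. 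Once this is settled, the induction of the second paragraph finishes the argument; the summands with $r-\lambda_j<0$ are handled identically, the relevant statement there being the classical Bialynicki--Birula basis formula in (Borel--Moore) singular homology.
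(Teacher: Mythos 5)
Your argument is sound in outline, but it is not the route the paper takes for Theorem \ref{Th1.1}. There the isomorphism is deduced from Karpenko's motivic decomposition theorem for varieties filtered by affine bundles over smooth projective bases, $h(X)\simeq\bigoplus_i h(F_i)(\lambda_i)$ in the category of Chow motives, combined with the fact that correspondences act on Lawson homology, so any Chow--motivic decomposition descends to one in $L_*H_*$. What you have written is instead, almost step for step, the paper's proof of the more general Theorem \ref{Th1.2} (which allows singular $X$, and of which Theorem \ref{Th1.1} is the special case furnished by the Bialynicki--Birula filtration): localization for $X_{(m-1)}\subset X_{(m)}$, homotopy invariance over the open stratum, and a splitting manufactured from the closure of the graph of the contraction. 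The trade-off is clear: the motivic route gives the decomposition for every oriented cohomology theory at once and never has to discuss cycle spaces over the singular closures $\overline{X_m^{+}}$, but it needs $X$ and the $F_i$ smooth projective; your route is the one that survives in the singular setting.

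On the one point you flag as the main obstacle --- making $p_{m*}q_m^{*}$ rigorous when $\overline{X_m^{+}}$ and $X_{(m)}$ are singular --- the paper's solution is neither flattening nor a cap product on $X\times F_m$. It observes that for every subvariety $V\subset F_m$ of pure dimension $q$, the preimage $q_m^{-1}(V)\subset\overline{\Gamma}_m$ is a variety of pure dimension $q+\lambda_m$ (it is the closure of $\{(x,y): y\in V,\ x\in\pi_m^{-1}(y)\}$), so $V\mapsto q_m^{-1}(V)$ already defines a continuous homomorphism $\cZ_q(F_m)\to\cZ_{q+\lambda_m}(\overline{\Gamma}_m)$ in the equidimensional-cycle topology, which by Lima-Filho agrees with the Chow topology used to define Lawson homology. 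Composing with the proper pushforward along $\overline{\Gamma}_m\to X$ then produces cycles literally supported in $\overline{X_m^{+}}\subset X_{(m)}$, so the factorization through $L_rH_k(X_{(m)})$ and the identification of the composite with $\pi_m^{*}$ over the open stratum come for free, with no moving-lemma bookkeeping. With that substitution your induction closes and the argument is complete.
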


\begin{remark}
 For $r=0$, by Dold-Thom theorem, $L_0H_k(Y)\cong H_k(Y,\Z)$ for any $k\in\Z$ and any projective varieties $Y$.
In this case,  the isomorphism in Theorem \ref{Th1.1} is called the Homology Basis Formula, which holds
for $X$ is a compact K\"{a}lhler manifold (cf. \cite{Carrell-Sommese}), by using the Bialynicki-Birula decomposition (cf.\cite{Biaƚynicki-Birula}). Such a result has been shown to hold even on Riemannian manifold admitting a generalized Morse-Stokes flow \cite{Harvey-Lawson}.
\end{remark}

Now we consider certain singular projective varieties.
A \textbf{filtration} on a projective variety $X$ is a nest family  $\emptyset=X_{-1}\subset X_0\subset X_1\subset\cdots\subset X_n=X$
of closed algebraic subsets $X_i-X_{i-1}$ is locally closed in $X$.

\begin{theorem}\label{Th1.2}
 Let $X$ be a (not necessarily smooth) projective variety which admits a filtration $\{X_i\}_{i=0}^{\nu}$
such that $p_i:X_i-X_{i-1}\to F_i$
is a locally trivial fibration  over a projective variety $F_i$ with fibers $\C^{\lambda_i}$. Then there is
isomorphism of Lawson homology groups
\begin{equation*}
\bigoplus_{j=1}^{\nu} L_{r-\lambda_j}H_{k-2\lambda_j}(F_j)\cong L_{r}H_{k}(X)
\end{equation*}
  for all $k\geq 2r\geq 0$.
\end{theorem}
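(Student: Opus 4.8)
The plan is to argue by induction on the length $\nu$ of the filtration, the inductive step combining the localization long exact sequence in Lawson homology with the homotopy invariance of $L_\bullet H_\bullet$ along affine bundles. Write $U:=X\setminus X_{\nu-1}=X_\nu\setminus X_{\nu-1}$, a Zariski--locally trivial $\C^{\lambda_\nu}$-bundle $p\colon U\to F_\nu$ over the projective variety $F_\nu$; note that $X_{\nu-1}$, being closed in $X$, is projective and carries an induced filtration of the same type with one fewer stratum. The base case is immediate: the lowest non-empty stratum is closed in $X$, hence complete, and the total space of a $\C^{\lambda}$-bundle is complete only if $\lambda=0$, so that stratum is isomorphic to its base $F$ and the asserted isomorphism reads $L_rH_k(F)\cong L_rH_k(F)$.

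For the inductive step I would first invoke the localization exact sequence (Lima--Filho, Friedlander) attached to the closed embedding $X_{\nu-1}\hookrightarrow X$ with open complement $U$,
\begin{equation*}
\cdots\longrightarrow L_rH_k(X_{\nu-1})\xrightarrow{\ i_*\ }L_rH_k(X)\xrightarrow{\ \rho\ }L_rH_k(U)\xrightarrow{\ \partial\ }L_rH_{k-1}(X_{\nu-1})\longrightarrow\cdots,
\end{equation*}
and then identify $L_rH_k(U)$. Since $p$ is flat, flat pull-back $p^{*}$ is defined, and homotopy invariance of Lawson homology along affine bundles yields an isomorphism $p^{*}\colon L_{r-\lambda_\nu}H_{k-2\lambda_\nu}(F_\nu)\xrightarrow{\ \sim\ }L_rH_k(U)$. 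One proves this by choosing a finite Zariski cover of $F_\nu$ trivializing the bundle, applying Mayer--Vietoris for Lawson homology of quasi-projective varieties, and iterating the elementary homotopy property $\cZ_s(Y\times\A^1)\simeq\cZ_{s-1}(Y)$.

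It then remains to split the sequence, and this is the crux: equivalently, one must produce a section of $\rho$ on Lawson homology, so that $\partial=0$ and the resulting short exact sequences split. This needs a genuine geometric input. For a cycle $W$ on $F_\nu$ the closure in $X$ of the flat pull-back $p^{-1}(W)$, read modulo $\cZ_r(X_{\nu-1})$, ought to define such a section, but the Zariski closure does not vary continuously in families; so instead I would pass to the closure $\Gamma$ of the graph of $p$ inside $\overline{U}^{X}\times F_\nu$ and, after a flattening modification of $\Gamma$ supported over $\overline{U}^{X}\setminus U\subseteq X_{\nu-1}$, obtain a projective variety $\widetilde\Gamma$ with a proper morphism $q_1\colon\widetilde\Gamma\to X$ that is an isomorphism over $U$ and a flat morphism $q_2\colon\widetilde\Gamma\to F_\nu$ extending $p$; then $(q_1)_{*}\circ q_2^{*}$ is a continuous homomorphism $\cZ_{r-\lambda_\nu}(F_\nu)\to\cZ_r(X)$ whose composite with the quotient $\cZ_r(X)\to\cZ_r(X)/\cZ_r(X_{\nu-1})=\cZ_r(U)$ agrees with $p^{*}$. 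Passing to $\pi_\bullet$ this splits $\rho$, so $L_rH_k(X)\cong L_rH_k(X_{\nu-1})\oplus L_{r-\lambda_\nu}H_{k-2\lambda_\nu}(F_\nu)$, and the inductive hypothesis for $X_{\nu-1}$ finishes the proof. The main obstacle I anticipate is exactly the construction and continuity of this section --- equivalently, the existence of a homotopy section of the cycle-space fibration $\cZ_r(X_{\nu-1})\to\cZ_r(X)\to\cZ_r(U)$; a cleaner alternative, when the affine bundle $p$ compactifies to a projective bundle inside a blow-up of $X$, is to transport the canonical Leray--Hirsch splitting of the projective bundle formula in Lawson homology. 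Over $\Q$ (or under torsion-freeness of the groups involved) it suffices to check that $\rho$ is surjective on homology, the splitting then being automatic.
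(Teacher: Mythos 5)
Your overall strategy is the same as the paper's: induct on the length of the filtration, use the localization sequence for $X_{\nu-1}\subset X$ with open complement $U$, identify $L_rH_k(U)$ with $L_{r-\lambda_\nu}H_{k-2\lambda_\nu}(F_\nu)$ by homotopy invariance of the affine fibration, and split the sequence by a cycle-level section built from the closure of the graph of $p$. You have also correctly located the crux --- the continuity of that section --- but your proposed resolution is the one place where the argument does not go through as written. A ``flattening modification of $\Gamma$ supported over $\overline{U}^{X}\setminus U$'' that leaves the base $F_\nu$ untouched while making $q_2$ flat is not something Raynaud--Gruson-type flattening provides: flattening blows up the \emph{base}, so you would obtain a flat morphism to some modification $\widetilde F_\nu\to F_\nu$, and your $q_2^*$ would then act on cycles of $\widetilde F_\nu$ rather than of $F_\nu$. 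Moreover $F_\nu$ is only assumed projective, possibly singular, so insisting on flatness is stronger than what is available or needed.

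The paper's fix is lighter: it takes $\Gamma_i\subset X\times F_i$ to be the closure of the graph of $p_i$ and defines the section on cycle spaces directly by $V\mapsto \overline{\{(p_i^{-1}(y),y)\mid y\in V\}}$, which is pure of dimension $q+\lambda_i$ when $\dim V=q$; continuity is then not a matter of Zariski closures varying in families but comes from Friedlander--Gabber's construction of wrong-way maps for \emph{equidimensional} families of cycles, using the equidimensional topology on cycle spaces, which Lima-Filho showed coincides with the Chow topology used to define Lawson homology. So the input needed at your crux is equidimensionality of $\Gamma_i\to F_i$ plus this comparison of topologies, not flatness. With that substitution --- and the same localization/five-lemma bookkeeping you describe, which is exactly the paper's commutative ladder with the maps $\beta_i$, $\delta_i$, $\alpha_i$ and the resulting vanishing of $\partial$ --- your argument becomes the paper's proof.
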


\begin{remark}
 This is the Lawson homology analog of the homology basis theorem for a space with a filtration (cf. \cite{Carrell-Goresky}).
\end{remark}

The next result is on Lawson homology for certain singular complex varieties with $\C^*$-actions. Recall that
 a $\C^*$-action  $\C^*\times X\to X$ on a projective variety $X$
is called \textbf{singularity preserving} as $t\to 0$ if there exists
an equivariant Whiteney stratification of $X$ such that for every stratum $A$, and
for every point $x\in A$, the limit $x_0:=\lim_{t\to 0} t\cdot x$ is also in $A$. In this situation,
$X_j^+\to F_j$ is an affine space bundle of fiber dimension which is denoted by $\lambda_j$, where
$F_1,...,F_{\nu}$ are the fixed point components and  $X_j^+=\{x\in X|\lim_{t\to 0} t\cdot x\in F_j\}$
(cf. \cite{Carrell-Goresky}).

\begin{corollary}\label{cor1.5}
If $X$ admits a singularity preserving $\C^*$-action as $t\to 0$ with fixed point components $F_1,...,F_{\nu}$ such
that for any stratum $A$ of $X$ and for any fixed point component $F_j$, either $A\cap F_j=\emptyset$ or
$\overline{(A\cap F_j)^+}=\bar{A}\cap \overline{X^+_j}$,
 then we have
\begin{equation*}
\bigoplus_{j=1}^{\nu} L_{r-\lambda_j}H_{k-2\lambda_j}(F_j)\cong L_{r}H_{k}(X)
\end{equation*}
  for all $k\geq 2r\geq 0$.
\end{corollary}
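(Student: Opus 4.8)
The plan is to deduce the corollary directly from Theorem \ref{Th1.2} by producing, from the given singularity-preserving $\C^*$-action, a filtration of $X$ of the required type. The combinatorial input comes from the Bialynicki-Birula--type decomposition of a $\C^*$-space: order the fixed point components $F_1,\dots,F_\nu$ so that the function $x\mapsto \lim_{t\to 0} t\cdot x$ assigns to each component a "level," and set $X_i := \bigcup_{j\le i} X_j^+$, where $X_j^+=\{x\in X\mid \lim_{t\to 0} t\cdot x\in F_j\}$ as in the statement. First I would check that, after a suitable reordering, each $X_i$ is closed in $X$: this is where one uses that the action is singularity preserving and that the combinatorial order on the components (induced by the partial order $F_j\preceq F_k$ if $\overline{X_j^+}\cap F_k\neq\emptyset$) can be refined to a total order. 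The genuinely new hypothesis of the corollary, namely $\overline{(A\cap F_j)^+}=\bar A\cap\overline{X_j^+}$ for every stratum $A$, is exactly what guarantees that the closures behave compatibly with the stratification, so that $X_i$ is a genuine closed algebraic subset and $X_i-X_{i-1}=X_i^+$ is locally closed.

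Next I would verify that $p_i\colon X_i-X_{i-1}=X_i^+\to F_i$, $x\mapsto\lim_{t\to 0}t\cdot x$, is a locally trivial fibration with fibers affine spaces $\C^{\lambda_i}$. On the smooth locus this is the classical Bialynicki-Birula result; in the singular (stratified) setting this is precisely the content recalled before the corollary statement, citing \cite{Carrell-Goresky}: for a singularity-preserving action, $X_j^+\to F_j$ is an affine space bundle of fiber dimension $\lambda_j$. The role of the stratum-compatibility hypothesis here is to ensure that this bundle structure on $X_j^+$ is compatible with the induced structure on each stratum, so no pathology occurs along the singular locus of $X$ — in particular that $\lambda_j$ is well defined (constant on $F_j$, which one may also reduce to by passing to connected components of $F_j$ if necessary).

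Having established that $\{X_i\}_{i=1}^\nu$ is a filtration of $X$ with $p_i\colon X_i-X_{i-1}\to F_i$ a locally trivial $\C^{\lambda_i}$-bundle over the projective variety $F_i$, I would invoke Theorem \ref{Th1.2} verbatim to conclude
\[
\bigoplus_{j=1}^{\nu} L_{r-\lambda_j}H_{k-2\lambda_j}(F_j)\cong L_{r}H_{k}(X)
\]
for all $k\ge 2r\ge 0$, which is the assertion. The main obstacle I anticipate is the first step: showing that the $X_i$ are closed and that $X_i - X_{i-1} = X_i^+$, i.e. that the hypothesis $\overline{(A\cap F_j)^+}=\bar A\cap\overline{X_j^+}$ really does force the cells of the decomposition to assemble into a valid filtration by closed subvarieties. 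This is a statement about how plus-cells interact with a Whitney stratification; I would handle it by inducting on the strata (from open dense stratum inward), at each stage using the hypothesis to identify $\overline{X_j^+}\cap A$ with $\overline{(A\cap F_j)^+}$ and hence controlling how much of $\overline{X_j^+}$ lies in lower strata. Once the filtration is in place, everything else is a citation.
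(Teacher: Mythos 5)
Your proposal is correct and follows essentially the same route as the paper: reduce to Theorem \ref{Th1.2} by showing that the plus-cells of the singularity-preserving action assemble into a filtration $X_i=\bigcup_{j\le i}X_j^+$ by closed subvarieties with $X_i-X_{i-1}\to F_i$ an affine space bundle. The paper simply cites Theorem 4 of \cite{Carrell-Goresky} for exactly this filtration statement (which is where the hypothesis $\overline{(A\cap F_j)^+}=\bar A\cap\overline{X^+_j}$ is used), whereas you sketch its content; otherwise the two arguments coincide.
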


The proof of these results is given in section \ref{sec2}. Applications and examples are given in section \ref{sec3}.

\section{Proof of  main results}\label{sec2}

In this section we first review Bialynicki-Birula decomposition's decomposition and
Carrell--Sommese's construction in proving the Homology Basis Formula and then give a proof of Theorem \ref{Th1.1}.

Let $X$ be a smooth complex projective variety with a nontrivial
$\C^*$-action $\C^*\times X\to X, (\lambda,x)\mapsto \lambda\cdot x$. Denote by $F_j, j=1,2,...,\nu$
the connected components of $F$. There are two $\C^*$-invariant decompositions of $X$,
the plus and minus decompositions (cf. \cite{Biaƚynicki-Birula}):
$$X=\bigcup_{j=1}^{\nu} X_j^+, \quad where \quad X_j^+=\{x\in X|\lim_{\lambda\to 0}\lambda\cdot x\in F_j\}$$
and
$$X=\bigcup_{j=1}^{\nu} X_j^-, \quad where \quad X_j^-=\{x\in X|\lim_{\lambda\to \infty}\lambda\cdot x\in F_j\}.$$
For each $j$, let $n_j=\dim F_j$ and set $\lambda_j=\dim X_j^+-n_j$, which coincides
with the one defined in the introduction
by using the index of the associated Morse function (cf. \cite{Carrell-Sommese}, \cite{Harvey-Lawson}).
Then $\dim X_j^-=n-\lambda_j$.

Assume that $X_j$ is one of $X_j^+$ or $X_j^-$. Each $X_j$ is a smooth quasi-projective variety of $X$
Zariski open in its closure;  the natural map $p_j:X_j\to F_j$ can be given the structure of an
algebraic $\C^*$-equivariant vector bundle of rank $\lambda_j$; and the normal bundle of $F_j$
in $X_j$ is a subbundle of the normal bundle of $F_j$ in  $X$.

\begin{theorem}[\cite{Biaƚynicki-Birula}]\label{Th2}
 Let $X$ be a smooth projective variety over $\C$ admitting a nontrivial $\C^*$-action. Then
\begin{enumerate}
 \item $F$ is a finite disjoint union $F=\coprod_{j=1}^{\nu}F_j$
of smooth projective varieties.
\item After a suitable numbering of the component of the fixed point set $F=\coprod_{j=1}^{\nu}F_j$,
the union $X_i=\cup_{j=1}^{i}X_j^+$ is a closed subvariety of $X$ for all $i=1,2,...,\nu$.
\end{enumerate}
\end{theorem}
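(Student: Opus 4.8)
The statement is classical (Bialynicki--Birula; smoothness of the fixed locus goes back to Iversen and Fogarty), so I would reconstruct it as follows. The plan is to reduce everything to the structure of a \emph{linear} $\C^*$-action and then globalize, using two standard inputs available in characteristic $0$: Sumihiro's theorem, which provides a $\C^*$-equivariant closed embedding $X\hookrightarrow\P(V)$ where $V=\bigoplus_{m\in\Z}V_m$ is the weight-space decomposition of a finite-dimensional $\C^*$-representation; and the fact that near a fixed point the action is \'etale-locally linearizable.

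For part (1), I would fix $p\in F$, choose a $\C^*$-invariant affine neighbourhood of $p$ together with a basis of the cotangent space $T^*_pX$ consisting of $\C^*$-weight vectors, and thereby produce a $\C^*$-equivariant \'etale morphism from a neighbourhood of $p$ onto an open subset of $T_pX$ with its linear action; comparison with this linear model shows $F$ is smooth at $p$, with $T_pF=(T_pX)_0$ the weight-zero subspace. Hence $F$ is a smooth closed subscheme of $X$, so projective, and since $X$ is of finite type $F$ has finitely many connected components $F_1,\dots,F_\nu$, each a smooth projective variety.

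For part (2), I would work inside the embedding $X\subseteq\P(V)$. For $x=[v]\in X$ with $v=\sum_m v_m$, put $\mu^-(x):=\min\{m:v_m\neq 0\}$; then $[t\cdot v]\to[v_{\mu^-(x)}]$ as $t\to 0$, so (as $X$ is closed) the limit $x_0:=\lim_{t\to 0}t\cdot x$ exists in $X\cap\P(V_{\mu^-(x)})$ and is a fixed point, which yields the partition $X=\coprod_j X_j^+$ with $X_j^+=\{x:x_0\in F_j\}$, and $\mu^-$ is constant on each $F_j$, say equal to $m_j$. Now for every integer $c$ the set $\{\mu^-\ge c\}=X\cap\P(\bigoplus_{m\ge c}V_m)$ is a linear section of $X$, hence closed, while on the locally closed stratum $\{\mu^-=c\}$ the map $x\mapsto x_0$ is the restriction of the rational linear projection of $\P(V)$ onto $\P(V_c)$, hence a morphism. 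Consequently each $X_j^+$ is locally closed, and in fact closed inside its stratum $\{\mu^-=m_j\}$ because $F_j$ is open and closed in $F\cap\P(V_{m_j})$; therefore $\overline{X_j^+}\setminus X_j^+\subseteq\{\mu^->m_j\}$. In particular, if $F_i\subseteq\overline{X_j^+}$ with $i\neq j$ then $m_i>m_j$ strictly, so numbering the components with $m_1\ge m_2\ge\cdots\ge m_\nu$ (breaking ties arbitrarily) gives $\overline{X_j^+}\subseteq X_j^+\cup\bigcup_{m_i>m_j}X_i^+\subseteq\bigcup_{i\le j}X_i^+$, whence each $X_i=\bigcup_{j\le i}X_j^+$ is closed (and $\C^*$-invariant).

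I expect the real content to sit entirely in the two inputs quoted at the start — Sumihiro's equivariant embedding and the local linearizability at fixed points; once these are available, the combinatorics of part (2) is soft. If one also wants the $\C^*$-equivariant vector bundle structure on $p_j\colon X_j^+\to F_j$ that is used later in the paper (with $\lambda_j=\rank N_j^+$), the extra work is a relative version of the linearization along $F_j$: one decomposes the normal bundle $N_{F_j/X}=N_j^+\oplus N_j^-$ (there being no weight-zero summand, since $TF_j=(TX|_{F_j})_0$) and identifies a $\C^*$-invariant neighbourhood of $F_j$ in $X$ with a $\C^*$-invariant neighbourhood of the zero section of $N_j^+\oplus N_j^-$ by a $\C^*$-equivariant tubular-neighbourhood argument; this last step is where I would expect the main difficulty to lie.
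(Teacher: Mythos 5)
The paper does not prove this statement at all: it is quoted verbatim from Bia{\l}ynicki-Birula's 1973 paper (with the cell structure of $p_j\colon X_j^+\to F_j$ recalled just before it), so there is no in-paper argument to compare against. Your reconstruction is the standard proof and is correct modulo the two inputs you explicitly quote (Sumihiro's equivariant projective embedding and linearizability at a fixed point), which is a legitimate place to draw the line. Two small remarks. For part (1), the step ``comparison with the linear model shows $F$ is smooth at $p$'' deserves one more sentence: an equivariant \'etale map need not identify fixed loci globally, but at the fixed point $p$ it induces a $\C^*$-equivariant isomorphism of completed local rings, and formation of the fixed subscheme commutes with this, so $\widehat{\O}_{F,p}\cong\widehat{\O}_{(T_pX)_0,0}$ is regular; equivalently one can run the Iversen--Fogarty argument directly by choosing a regular system of parameters consisting of weight vectors. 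Part (2) is clean: the key observations --- that $\mu^-$ is constant equal to $m_j$ on $F_j$, that $\{\mu^-\ge c\}$ is a closed linear section, that $X_j^+$ is closed in its stratum $\{\mu^-=m_j\}$, hence $\overline{X_j^+}\setminus X_j^+\subseteq\{\mu^->m_j\}$ --- are exactly right, and the decreasing ordering of the $m_j$ then makes each $X_i=\bigcup_{j\le i}X_j^+$ closed (check: for $\P^1$ with $t\cdot[z_0:z_1]=[z_0:tz_1]$ the big cell limits to the weight-$0$ point and its closure adds the weight-$1$ point, consistent with your inequality). Your closing paragraph on the vector bundle structure of $X_j^+\to F_j$ goes beyond the stated theorem; just note that Bia{\l}ynicki-Birula's actual argument there is algebraic (via the action morphism and its extension over $t=0$) rather than a metric tubular-neighbourhood construction, which would only give a topological, not algebraic, identification.
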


Now we briefly review Chow motives, Lawson homology and their relations.
Let $X$ and $Y$ be smooth
projective varieties. A \textbf{correspondence} $\Gamma$ from $X$ to
$Y$ is a cycle (or an equivalent class of cycles depending on the
context) on $X\times Y$. We denote the group of correspondences of rational equivalence
classes between varieties $X$ and $Y$ by
$Corr_d(X, Y):={\rm Ch}_{\dim X+d}(X\times Y).$

For $\Gamma\in Corr_d(X,Y)$ and $\alpha\in L_pH_k(X)$, the push-forward
morphism is defined by
$$
\begin{array}{cc}
\Gamma_*: L_pH_k(X)\rightarrow L_{p+d}H_{k+2d}(Y), \quad
 \Gamma_*(\alpha)=p_{2*}(p_1^*\alpha\bullet \Gamma),
\end{array}
$$
where $\bullet$ is the intersection in Lawson homology (cf. \cite{Friedlander-Gabber}). 
Moreover, from the construction, the action $\Gamma_*$ depends only on the algebraic equivalent class $\Gamma$.

Let $\mathcal{V}$ denote the category of (not necessarily connected)
complex smooth projective varieties. Let $X$ and $Y$ be smooth
projective varieties. Suppose $X=\coprod X_\alpha$ is the
decomposition of $X$ into connected components. The group of
correspondences of degree $r$ from $X$ to $Y$ is defined as
$$Corr^r(X,Y):=\oplus\, Ch^{\dim X_\alpha+r}(X_\alpha\times Y).$$
The composition of correspondences $f\in
Corr^r(X,Y)$ and $g\in Corr^s(Y, Z)$ gives a correspondence in
$Corr^{r+s}(X, Z)$. A correspondence $\mathbf{p}\in Corr^0(X,X)$ is
called a projector of $X$ if $\mathbf{p}^2=\mathbf{p}$. The category
of \textbf{Chow motives} $CH\mathcal{M}$ is given as follows. Objects in $CH\mathcal{M}$
are triples $(X,\mathbf{p},r)$, also denoted by
$h(X,\mathbf{p})(-r)$, where $X\in\mathcal{V}$ , $\mathbf{p}$ is a
projector of $X$, and $r\in\mathbb{Z}$. In particular, the motive
$h(X, id_X)(-r)$ is simply denoted by $h(X)(-r)$.  Morphisms in
$CH\mathcal{M}$ are defined as
$$Hom_{CH\mathcal{M}}\big{(}(X,\mathbf{p},r), (Y,\mathbf{q},s)\big{)}:=\mathbf{q}\circ Corr^{s-r}(X, Y)\circ \mathbf{p}.$$
The composition of morphisms is defined using the composition of
correspondences.

The following result proved by N. A. Karpenko
in \cite{Karpenko}:
\begin{theorem}[Karpenko] \label{celldecom}
Let $X$ be a smooth projective variety. Assume $X$  admits a
filtration by closed subvarieties $\emptyset=X_{-1}\subset
X_{0}\subset\dots\subset X_{\nu}=X$ such that there exist flat
morphisms $f_{i}: X_{i}-X_{i-1}\rightarrow F_{i}$, of relative
dimension $ \lambda_{i}$ over smooth projective varieties $F_{i}$ ($1\leq
i\leq n$), such that the fiber of every $f_{i}$ over every point $y$
of $F_{i}$ is isomorphic to the affine space $\C^{\lambda_{i}}$. Then
there exists an isomorphism in $CH \mathcal{M}$:
\begin{equation}\label{eq3}
 h(X)\simeq\bigoplus_{i=0}^{\nu} h(F_{i})( \lambda_{i}).
\end{equation}
 \end{theorem}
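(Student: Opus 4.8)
I would prove Theorem~\ref{celldecom} by induction on the length $\nu$ of the filtration, first establishing the analogous decomposition of Chow groups and then promoting it to a decomposition of motives. The one geometric input needed is a homotopy-invariance lemma: if $g\colon Y\to B$ is a flat morphism of smooth varieties (not necessarily projective) every one of whose scheme-theoretic fibres is isomorphic to $\A^{m}$, then flat pull-back $g^{*}\colon\Ch_{k}(B)\to\Ch_{k+m}(Y)$ is an isomorphism for every $k$; when $g$ is a torsor under a vector bundle this is Fulton's theorem, and in general it follows by Noetherian induction on $B$ using the localization sequence. Applying this to each $f_{i}\colon X_{i}-X_{i-1}\to F_{i}$ and feeding the result into the right-exact localization sequence $\Ch_{*}(X_{i-1})\to\Ch_{*}(X_{i})\to\Ch_{*-\lambda_{i}}(F_{i})\to 0$, one obtains by induction on $i$ an isomorphism $\Ch_{*}(X)\cong\bigoplus_{i=0}^{\nu}\Ch_{*-\lambda_{i}}(F_{i})$; a section $\Ch_{*-\lambda_{i}}(F_{i})\to\Ch_{*}(X)$ is given by $\gamma\mapsto(\mathrm{pr}_{X})_{*}\big(\mathrm{pr}_{F_{i}}^{*}\gamma\bullet[\overline{\Gamma_{f_{i}}}]\big)$, where $\overline{\Gamma_{f_{i}}}\subset X_{i}\times F_{i}\subset X\times F_{i}$ denotes the closure of the graph of $f_{i}$.

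To upgrade this to a statement about motives I would turn the relevant cycles into correspondences. Since $\overline{\Gamma_{f_{i}}}$ has dimension $\dim F_{i}+\lambda_{i}$, it lies in $\Ch^{\dim X-\lambda_{i}}(X\times F_{i})=Corr^{-\lambda_{i}}(X,F_{i})=\mathrm{Hom}_{CH\mathcal{M}}\big(h(X),h(F_{i})(\lambda_{i})\big)$ and therefore defines a morphism $\beta_{i}\colon h(X)\to h(F_{i})(\lambda_{i})$. The companion morphism $\alpha_{i}$ in the opposite direction cannot be obtained simply by transposing $\overline{\Gamma_{f_{i}}}$, since the degrees do not match; instead I would use that a product of two varieties satisfying the hypotheses of the theorem again satisfies them --- equip $X\times F_{i}$, $F_{i}\times X$ and $X\times X$ with the product filtrations, whose strata are affine-space bundles over the products $F_{j}\times F_{i}$, and so on --- so that the Chow-group statement just proved yields explicit direct-sum descriptions of $\Ch^{*}(X\times F_{i})$, $\Ch^{*}(F_{i}\times X)$ and $\Ch^{*}(X\times X)$. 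Reading off the appropriate summand of $[\overline{\Gamma_{f_{i}}}]$, respectively of the diagonal $[\Delta_{X}]$, produces correspondences $\alpha_{i}\in\mathrm{Hom}\big(h(F_{i})(\lambda_{i}),h(X)\big)$ with $\beta_{i}\circ\alpha_{i}=\mathrm{id}$ --- an identity that reduces, via the projection formula, to homotopy invariance for the fibre product $\overline{\Gamma_{f_{i}}}\times_{F_{i}}\overline{\Gamma_{f_{i}}}$ --- while $\beta_{i}\circ\alpha_{j}$ for $j<i$ is represented by a cycle supported over $X_{i-1}$.

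Finally set $\p_{i}:=\alpha_{i}\circ\beta_{i}\in Corr^{0}(X,X)$, so that $\p_{i}^{2}=\alpha_{i}\circ(\beta_{i}\circ\alpha_{i})\circ\beta_{i}=\p_{i}$; the crux of the proof is to check orthogonality, $\p_{i}\circ\p_{j}=0$ for $i\neq j$, and completeness, $\sum_{i=0}^{\nu}\p_{i}=\Delta_{X}$. I would argue both by descending induction on $i$, again using the product-filtration description of $\Ch^{*}(X\times X)$: one shows that $\Delta_{X}-\sum_{j>i}\p_{j}$ is represented by a cycle supported on $X_{i}\times X_{i}$, applies the theorem inductively to the relatively cellular space $X_{i}$, and absorbs the resulting discrepancy by correcting each $\alpha_{j}$ with $j\le i$ by its components along the lower strata; orthogonality then drops out of the same support-and-degree bookkeeping. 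Once the $\p_{i}$ are mutually orthogonal idempotents summing to $\Delta_{X}$, the pair $(\alpha_{i},\beta_{i})$ realizes $(X,\p_{i},0)\cong h(F_{i})(\lambda_{i})$, whence $h(X)=\bigoplus_{i}(X,\p_{i},0)\cong\bigoplus_{i=0}^{\nu}h(F_{i})(\lambda_{i})$. I expect this last step to be the main obstacle: because $CH\mathcal{M}$ is not triangulated, the split localization sequence for Chow groups does not formally lift to a decomposition of motives, so the idempotents must be produced and verified by hand, the only real leverage being the somewhat delicate explicit computation of $\Ch^{*}(X\times X)$ coming from the product filtration together with the homotopy-invariance lemma. (Alternatively, one could pass to Voevodsky's category $DM_{gm}$, in which the localization triangle relating $M(X_{i-1})$, $M(X_{i})$ and $M(F_{i})(\lambda_{i})[2\lambda_{i}]$ does exist, show that it splits by a $\mathrm{Hom}$-vanishing argument, and transport the decomposition back to $CH\mathcal{M}$ through the full embedding of the category of Chow motives into $DM_{gm}$.)
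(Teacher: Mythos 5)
The paper does not prove this statement: it is quoted as a theorem of Karpenko with a citation to [K], and is used as a black box. So there is no internal proof to compare yours against; what follows is an assessment of your proposal on its own terms. Its architecture does match that of the published proofs (Karpenko's, and the later treatments of Nenashev--Zainoulline and Brosnan): compute $\Ch^*$ of $X\times F_i$, $F_i\times X$ and $X\times X$ by running the localization/homotopy-invariance argument in one factor of the product filtration; take $\beta_i$ to be the closure of the graph of $f_i$ and extract $\alpha_i$ from the summand of $\Ch^{\dim F_i+\lambda_i}(F_i\times X)$ corresponding to the diagonal of $F_i$; then verify by triangular ``supported over $X_{i-1}$'' bookkeeping that suitable corrections of the $\p_i=\alpha_i\circ\beta_i$ are orthogonal idempotents summing to $[\Delta_X]$. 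You also correctly identify the two genuine subtleties: the naive transpose of $\overline{\Gamma_{f_i}}$ has the wrong degree, and the splitting of Chow groups does not formally lift to motives.

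Two steps are nonetheless asserted rather than proved. First, your homotopy-invariance lemma: Noetherian induction on the base plus the localization sequence yields \emph{surjectivity} of $g^*\colon \Ch_k(B)\to \Ch_{k+m}(Y)$ for a flat morphism with fibres $\A^m$, but not injectivity --- such a fibration need not be a torsor under a vector bundle even generically, so Fulton's zero-section argument is unavailable, and injectivity must be obtained by exhibiting a left inverse (e.g.\ the correspondence $\gamma\mapsto(\mathrm{pr}_{B})_*\bigl(\mathrm{pr}_Y^*\gamma\cdot[\overline{\Gamma_g}]\bigr)$ built from the closure of the graph in a compactification --- the same device you use later for the section). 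Second, the orthogonality-and-completeness verification, which you rightly call the crux, is left as a plan: the statement actually needed is that the component of $[\Delta_X]$ in the $j=i$ summand of $\Ch^{\dim X-\lambda_i}(X\times F_i)$ equals $[\overline{\Gamma_{f_i}}]$ modulo classes supported over $X_{i-1}\times F_i$, and the descending-induction correction of the $\alpha_j$ has to be carried out against that statement. Your alternative route through $DM_{gm}$ is also viable (it is essentially Brosnan's), though the localization triangles there involve motives with compact support of the singular $X_i$, so some care with functoriality or coefficients is required before transporting the splitting back to $CH\mathcal{M}$.
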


This implies  the following corollary.
\begin{corollary}[\cite{Hu-Li}] \label{Th5}
Using the notations in Theorem \ref{celldecom}, we have
\begin{equation}\label{eq4.3}
  L_rH_k(X)\simeq\bigoplus_{i=0}^{\nu} L_{r- \lambda_i}H_{k-2 \lambda_i}(F_{i}).
\end{equation}
\end{corollary}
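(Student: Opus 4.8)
The plan is to promote the motivic identity \eqref{eq3} of Theorem \ref{celldecom} to an identity of Lawson homology groups, by passing the isomorphism of Chow motives through a \emph{Lawson homology realization functor} $L_rH_k(-)\colon CH\mathcal{M}\to\mathbf{Ab}$. The whole content is to set up this functor; once it is in place, the corollary is immediate.

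First I would verify that for smooth projective $X,Y,Z$ the correspondence action recalled above is compatible with composition: if $\Gamma\in Corr_c(X,Y)$ and $\Gamma'\in Corr_d(Y,Z)$, then $\Gamma'\circ\Gamma\in Corr_{c+d}(X,Z)$ and $(\Gamma'\circ\Gamma)_*=\Gamma'_*\circ\Gamma_*$ as maps $L_pH_k(X)\to L_{p+c+d}H_{k+2c+2d}(Z)$. This is a purely formal consequence of the properties of the intersection pairing $\bullet$ in Lawson homology established by Friedlander--Gabber \cite{Friedlander-Gabber} --- associativity, the projection formula $p_*(p^*(a)\bullet b)=a\bullet p_*(b)$, and flat base change --- run through the usual triple-product diagram on $X\times Y\times Z$ exactly as for Chow correspondences; the relevant fibre products here are again smooth projective, so the pairing is defined throughout. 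One also uses that $\Gamma_*$ depends only on the algebraic (hence a fortiori the rational) equivalence class, and that $L_*H_*$ is additive over finite disjoint unions. With these in hand, to a motive $M=(Y,\mathbf{q},s)$ I attach
\begin{equation*}
L_rH_k(M):=\mathbf{q}_*\bigl(L_{r+s}H_{k+2s}(Y)\bigr),
\end{equation*}
a direct summand of $L_{r+s}H_{k+2s}(Y)$ because $\mathbf{q}_*$ is an idempotent endomorphism of that abelian group and idempotents split in $\mathbf{Ab}$ (so, unlike for the category $CH\mathcal{M}$ itself, no rational coefficients are required). A morphism of motives induces, by composition-compatibility, a homomorphism of the associated groups; an isomorphism induces an isomorphism. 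With the excerpt's convention $h(Y,\mathbf{q})(-r)=(Y,\mathbf{q},r)$ one checks $L_rH_k(h(X))=L_rH_k(X)$ and $L_rH_k\bigl(h(F_i)(\lambda_i)\bigr)=L_rH_k\bigl((F_i,\mathrm{id},-\lambda_i)\bigr)=L_{r-\lambda_i}H_{k-2\lambda_i}(F_i)$; the Tate-twist bookkeeping is pinned down, e.g., by the sanity check that this recovers $L_rH_{2r}(\P^n)=\Z$ for $0\le r\le n$ from $h(\P^n)\simeq\bigoplus_{i=0}^n\mathbb{1}(i)$.

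Applying the functor $L_rH_k(-)$ to the isomorphism $h(X)\simeq\bigoplus_{i=0}^{\nu}h(F_i)(\lambda_i)$ of Theorem \ref{celldecom} then gives
\begin{equation*}
L_rH_k(X)\;\simeq\;\bigoplus_{i=0}^{\nu}L_{r-\lambda_i}H_{k-2\lambda_i}(F_i),
\end{equation*}
which is \eqref{eq4.3}. (For $r<0$ both sides are ordinary integral homology by definition, and the identity is just the Betti realization of \eqref{eq3}, a classical cell-decomposition computation.)

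I expect the main obstacle to be precisely the functoriality $(\Gamma'\circ\Gamma)_*=\Gamma'_*\circ\Gamma_*$: one must check that every step in the classical derivation that correspondences compose --- projection formula, associativity of $\bullet$, and compatibility of $\bullet$ with proper push-forward and flat (or l.c.i.) pull-back along the projections of the triple product --- is available in the Lawson-homology intersection theory of Friedlander--Gabber on smooth projective varieties, possibly after a moving argument. Everything else --- additivity, splitting of the (here trivial) projectors, and tracking the Tate twist --- is routine.
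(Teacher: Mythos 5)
Your proposal is correct and follows essentially the same route as the paper: the paper deduces the corollary from Karpenko's motivic decomposition precisely because Lawson homology carries a functorial action of Chow correspondences (the $\Gamma_*$ recalled in the text, with composition-compatibility established in \cite{Hu-Li}), i.e.\ it is a realization of $CH\mathcal{M}$, and your construction of the functor $L_rH_k(-)$ with the twist bookkeeping is exactly that implication spelled out.
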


Therefore, we get the isomorphism in Theorem \ref{Th1.1}.

\begin{remark}
 Equation (\ref{eq3}) implies the decomposition of any
oriented cohomology theory (cf. \cite{Nenashev-Zainoulline}).
The Lawson homology  is such a theory. The combination of Theorem \ref{Th2} and \ref{celldecom}
implies a motivic decomposition for smooth complex projective variety with $\C^*$-action (cf.\cite{Brosnan}).
 \end{remark}

\begin{proof}[The proof of Theorem \ref{Th1.2}]
 Set $W_i:=X_i-X_{i-1}$ and let $\Gamma_i$ be the topological closure of $\{(x,p_i(x))|x\in W_i\}\subset X\times F_i$.
This is a closed algebraic subvariety of $X\times F_i$. Since $W_i\subset X$ is locally closed,
the graph of $p_i$ is Zariski open in $\Gamma_i$. Let $g_i: \Gamma_i\to F_i$ be the projection onto $X_i$ and
$g_i^*:L_{r-\lambda_i}H_{k-2\lambda_i}(F_i)\to L_rH_k(\Gamma_i) $ the induced map on Lawson homology. The existence
of such a ``wrong way'' homomorphism $g_i^*$ in this case follows essentially
from the constructions in \cite{Friedlander-Gabber}, since for each variety $V\subset F_i$ of pure dimension $q$,
$g_i^{-1}(V)=\overline{\{(p_i^{-1}(y),y)|y\in V\}}$ is a variety of pure dimension $q+\lambda_i$ in $\Gamma_i$.
 This induces a continuous map $g_i^*:\cZ_q(F_i)\to \cZ_{q+\lambda_i}(\Gamma_i)$ in the \emph{equidimensional topology}
for cycle spaces. This topology coincides with the natural Chow topology we used in defining Lawson homology (cf. \cite{Lima-Filho3}).   By composing the homomorphism
$L_rH_k(\Gamma_i) \to L_rH_k(X) $  induced by the projection $\Gamma_i\to X$, we get a homomorphism
$\mu_i:L_{r-\lambda_i}H_{k-2\lambda_i}(F_i)\to L_rH_k(X) $ for each $i=1,2,...,\nu$. Note that the image of $\mu_i$ is in
$L_rH_k(X_i)$ so it factors through $\beta_i:L_{r-\lambda_i}H_{k-2\lambda_i}(F_i)\to L_rH_k(X_i)$.

Now we consider the following exact sequences (Comparing to that in the proof of Theorem 1 in \cite{Carrell-Goresky})
$$
\xymatrix{L_{r}H_{k+1}(W_i)\ar[r]^-{\partial}& L_{r}H_{k}(X_{i-1})\ar[r] & L_{r}H_{k}(X_{i})\ar[r] &L_{r}H_{k}(W_i)\ar[r]^-{\partial} &\cdots \\
0\ar[r]&\oplus_{j<i}A_{r,k,j}\ar[r]\ar[u]^{\alpha_{i-1}}&\oplus_{j\leq i}A_{r,k,j}\ar[u]^{\alpha_{i}}\ar[r]& A_{r,k,i}\ar[r]\ar[ul]_{\beta_i}\ar[u]^{\delta_i}&0
}
$$
where $A_{r,k,j}:=L_{r-\lambda_j}H_{k-2\lambda_j}(F_{j})$, $\alpha_i=\oplus_{j\leq i} \beta_j$, $\beta_i$ are given above,
and the map $\delta_i$ is the composed map of $\beta_i$ and the restriction $L_{r}H_{k}(X_{i})\to L_{r}H_{k}(W_i)$.
From the construction, we see that the map $\delta_i$ is induced by the fiber bundle map $p_i:W_i\to F_i$. Since the induced map
$p_{i}^*:L_{q}H_{k}(F_i)\to L_{q+\lambda_i}H_{k+2\lambda_i}(W_i)$ is an isomorphism for all $k\geq 2q\geq 0$
 (cf. \cite[Prop. 2.3]{Friedlander-Gabber}), we see that $\delta_i$ is an isomorphism.
 By induction on $i$, $\alpha_{i-1}$ is an isomorphism. Since the diagram  commutes, we have $\partial=0$.
Now we complete the proof of Theorem \ref{Th1.2} by the Five lemma and induction on $i$.
\end{proof}

\begin{proof}[The proof of Corollary \ref{cor1.5}]
 It follows from Theorem 4 in \cite{Carrell-Goresky} and Theorem \ref{Th1.2} above.
\end{proof}

\section{Applications and Examples}\label{sec3}

\subsection{Projective Cone} Let $\P^n\subset\P^{n+1}$ be a linear hyperplane defined by $z_{n+1}=0$, where
$[z_0:z_1:\cdots :z_{n+1}]$ is the homogeneous coordinates of $\P^{n+1}$.  Set $P=[0:0:\cdots:1]$ and note that
$P\in \P^{n+1}$ is a point out of the hyperplane $\P^n$.
Let $X\subset \P^n$ be a projective algebraic variety and let $\Sigma_P(X)$ be the suspension of $X$ (or Complex cone on $X$)
with vertex $P$.

Under this setting, one has the following result.
\begin{corollary}\label{cor3.10} If $k\geq 2r\geq 0$, then
 $$
L_{r}H_k(\Sigma_P(X))\cong \left\{
\begin{array}{cll}
L_{r-1}H_{k-2}(X),&\hbox{if $ r>0$,}\\
 H_{k-2}(X,\Z),&\hbox{if $r=0$ and $k>0$,}\\
  \Z,&\hbox{if $r=0$ and $k=0$.}
\end{array}
\right.
$$
\end{corollary}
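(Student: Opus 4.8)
The plan is to realize $\Sigma_P(X)$ as a projective variety carrying a natural $\C^*$-action and to apply Theorem~\ref{Th1.2} (or its antecedent, Corollary~\ref{Th5}). Concretely, the cone $\Sigma_P(X)\subset\P^{n+1}$ is the union of all projective lines joining $P=[0:\cdots:0:1]$ to points of $X\subset\P^n$. The torus $\C^*$ acts on $\P^{n+1}$ by $\lambda\cdot[z_0:\cdots:z_n:z_{n+1}]=[z_0:\cdots:z_n:\lambda z_{n+1}]$, and this action preserves $\Sigma_P(X)$. Its fixed locus has exactly two components: the vertex $F_1=\{P\}$, and the base $F_2=X$ (the hyperplane section $\Sigma_P(X)\cap\{z_{n+1}=0\}$, which is a copy of $X$).

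Next I would identify the Bialynicki-Birula--type filtration. Taking limits as $\lambda\to 0$, every point of $\Sigma_P(X)$ not equal to $P$ flows into the hyperplane $\{z_{n+1}=0\}$, i.e.\ into $X$; only $P$ itself flows to $P$. Thus $X_0=\{P\}$ and $X_1=\Sigma_P(X)$, with $W_1=X_0-X_{-1}=\{P\}$ and $W_2=X_1-X_0=\Sigma_P(X)\setminus\{P\}$. The latter is precisely the total space of the line bundle over $X$ obtained by removing the zero section's complement appropriately — in fact $\Sigma_P(X)\setminus\{P\}\to X$ is an affine line bundle, $[z_0:\cdots:z_n:z_{n+1}]\mapsto[z_0:\cdots:z_n]$, with fiber $\C$ (i.e.\ $\lambda_2=1$), while the vertex contributes $F_1=\{P\}$ with $\lambda_1=0$. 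Hence the hypotheses of Theorem~\ref{Th1.2} are met with $\nu=2$, $(F_1,\lambda_1)=(\{P\},0)$ and $(F_2,\lambda_2)=(X,1)$.

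Applying Theorem~\ref{Th1.2} then gives
\begin{equation*}
L_rH_k(\Sigma_P(X))\cong L_rH_k(\{P\})\oplus L_{r-1}H_{k-2}(X).
\end{equation*}
It remains to evaluate $L_rH_k(\{P\})$. Since a point has $\cZ_r(\{P\})=\cZ_0(\{P\})=\Z$ for $r=0$ (trivial for $r>0$), we get $L_rH_k(\{P\})=\Z$ when $r=k=0$ and $0$ otherwise. Combining the two summands and splitting into cases according to whether $r>0$, or $r=0$ with $k>0$, or $r=0=k$ — using also the Dold--Thom identification $L_0H_*(X)\cong H_*(X,\Z)$ for the $r=0$ case — yields exactly the three-case formula in the statement.

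The main obstacle I anticipate is purely bookkeeping: verifying rigorously that $\Sigma_P(X)\setminus\{P\}\to X$ really is a \emph{locally trivial} fibration with fiber $\C$ in the Zariski topology (so that Theorem~\ref{Th1.2} applies on the nose), and that the closed-subvariety nesting $\{P\}\subset\Sigma_P(X)$ is the correct Bialynicki-Birula ordering; one must be slightly careful because $\Sigma_P(X)$ is typically singular at $P$, so the smooth-case Theorem~\ref{Th2} does not directly apply and one genuinely needs the singular version, Theorem~\ref{Th1.2}. Once local triviality is in hand — it follows from trivializing over the standard affine charts $\{z_i\neq 0\}$ of $\P^n$, on each of which the cone minus vertex is visibly $\{z_i\neq0\}\cap X$ times $\A^1$ — the rest is immediate.
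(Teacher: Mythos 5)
Your proposal is correct and follows essentially the same route as the paper: the same $\C^*$-action $[z_0:\cdots:z_n:z_{n+1}]\mapsto[z_0:\cdots:z_n:tz_{n+1}]$, the same two fixed components $\{P\}$ and $X$ with the cone minus its vertex an affine line bundle over $X$, and the same application of Theorem~\ref{Th1.2} followed by the evaluation of $L_rH_k(\{P\})$. The only cosmetic difference is that for the $r=0$, $k>0$ case the relevant identification is the paper's convention $L_{-1}H_{k-2}(X):=H_{k-2}(X,\Z)$ rather than Dold--Thom, but this does not affect the argument.
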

\begin{remark}
The Lawson homology theory was originated from this fundamental result, first proved by Blaine Lawson (cf. \cite{Lawson1}).
It is now called the Algebraic Suspension Theorem.
Since the proof of our main theorems (Theorem \ref{Th1.1} and \ref{Th1.2}) depends implicitly on this result.
The ``proof'' below  is a  rephrase of Lawson's theorem in our words.
\end{remark}

\begin{proof}
 Consider the action of $\C^*$ on $\P^{n+1}$ by
$$
\Phi_t:\P^{n+1}\to \P^{n+1}, \quad  [z_0:\cdots:z_n:z_{n+1}]\mapsto [z_0:\cdots:z_n:tz_{n+1}].
$$

Restricted  to $\Sigma_P(X)$, we get  an induced $\C^*$-action from  $\Phi_t$. The fixed point set of the induced action
is $F=F_1\cup F_2$, where $F_1\cong X$ and $F_2=P$. Using noations in Theorem \ref{Th1.2}, we see from the definition that $X_1^+\to F_1$
is a fiber bundle with fibers $\C$ and $X_2^+=P$. By Theorem \ref{Th1.2}, we have
$$
L_{r}H_k(\Sigma_P(X))\cong L_{r-1}H_{k-2}(X)\oplus L_{r}H_k(P).
$$

If $r> 0$, then $L_{r}H_k(P)=0$. If $r=0$ and $k>0$, then from definition $L_{r}H_k(P)=0$ and
$L_{r-1}H_{k-2}(X)\cong H_{k-2}(X,\Z)$. In particular, since $H_{k-2}(X,\Z)=0$ for $k=1$,
$L_{0}H_1(\Sigma_P(X))=0$ for any $X$. If $r=k=0$, then $L_{0}H_0(\Sigma_P(X))=L_{0}H_0(P)=\Z$.

\end{proof}

\subsection{Projective varieties admitting a Cell decomposition}
A projective variety  $X$ is said to \emph{admit a cell decomposition} if there exists  a filtration
$\emptyset=X_{-1}\subset X_0\subset X_1\subset\cdots\subset X_N=X$
such that $X_i-X_{i-1}$ is isomorphic to $\C^{ \lambda_i} $ for all $i$ (where $0= \lambda_0\leq \lambda_1\leq  \lambda_2\leq\cdots$). Sometimes we also call such an $X$ is  \emph{cellular}.
The Lawson homology for those $X$ was proved to isomorphic to the singular homology with integral coefficients
(\cite[\S5]{Lima-Filho2}).
\begin{corollary} Let  $X$  admit a cell decomposition defined as above. Then  we have
$$
L_rH_k(X)\cong \Z^{n_i}
$$
for all $k\geq 2r\geq 0$, where $n_i$ is the number of $i$ such that $k=2\lambda_i$. In particular, $L_rH_k(X)=0$ if
$k$ is odd.
\end{corollary}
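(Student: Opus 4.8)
The plan is to apply Theorem~\ref{Th1.2} directly to the cell decomposition. A cell decomposition is the special case of the filtration hypothesis in Theorem~\ref{Th1.2} (equivalently, of Karpenko's hypothesis in Theorem~\ref{celldecom}) in which each $F_i$ is a single reduced point: the locally closed piece $X_i - X_{i-1}\cong\C^{\lambda_i}$ is trivially a locally trivial fibration over $F_i = \mathrm{pt}$ with fiber $\C^{\lambda_i}$. First I would observe that the hypotheses of Theorem~\ref{Th1.2} are met, so that we have an isomorphism $L_rH_k(X)\cong\bigoplus_{i} L_{r-\lambda_i}H_{k-2\lambda_i}(\mathrm{pt})$ for all $k\geq 2r\geq 0$.

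Next I would compute each summand $L_{r-\lambda_i}H_{k-2\lambda_i}(\mathrm{pt})$. For a point, $\cZ_s(\mathrm{pt})$ is nontrivial only for $s=0$, where it is the free abelian group on one generator with the discrete topology, so $\cZ_0(\mathrm{pt})\cong\Z$; hence by the Dold--Thom computation (or directly) $L_0H_j(\mathrm{pt})\cong H_j(\mathrm{pt},\Z)$, which is $\Z$ for $j=0$ and $0$ otherwise, while $L_sH_j(\mathrm{pt})=0$ for $s>0$ (and $j\ge 2s$). For the index $s = r-\lambda_i$: since $0\le\lambda_0\le\lambda_1\le\cdots$ and $k\ge 2r$, if $\lambda_i < r$ then $s>0$ and the summand vanishes; if $\lambda_i>r$ then $r-\lambda_i<0$, but then $k-2\lambda_i < k-2r\le 0$ need not hold in general, so I would instead note that the only way to get a nonzero contribution is $s=0$ and $k-2\lambda_i=0$, i.e. $\lambda_i = r$ and $k = 2\lambda_i$. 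Thus a summand indexed by $i$ contributes exactly $\Z$ when $k = 2\lambda_i$ (which forces $\lambda_i = r$, consistent with $k\ge 2r$), and $0$ otherwise.

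Assembling these, $L_rH_k(X)$ is the direct sum of one copy of $\Z$ for each index $i$ with $k = 2\lambda_i$, hence $L_rH_k(X)\cong\Z^{n_i}$ where $n_i$ is the number of $i$ with $k = 2\lambda_i$, as claimed. In particular, if $k$ is odd there is no such $i$, so $L_rH_k(X)=0$. I would also remark that this recovers, and refines to all $r$, the statement that the Lawson homology of a cellular variety agrees with its singular homology (cf.~\cite[\S5]{Lima-Filho2}): taking $r=0$ gives $L_0H_k(X)\cong H_k(X,\Z)\cong\Z^{n_0'}$ where $n_0'$ counts the cells of real dimension $k$.

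The only mild subtlety, and the one point worth checking carefully rather than the conceptual core, is the bookkeeping of indices: one must verify that whenever $k = 2\lambda_i$ with $k\ge 2r$ the corresponding summand is indeed $\Z$ (this needs $\lambda_i\ge r$, but the summand $L_{r-\lambda_i}H_0(\mathrm{pt})$ is $\Z$ for \emph{any} $r-\lambda_i\le 0$ by the convention $L_sH_k(\mathrm{pt})=H_k(\mathrm{pt},\Z)$ for $s<0$, and $=\Z$ for $r-\lambda_i=0$), and that all other summands vanish (which they do since $H_j(\mathrm{pt})=0$ for $j\ne 0$ forces $k=2\lambda_i$). So there is no real obstacle; the corollary is essentially an unwinding of Theorem~\ref{Th1.2} together with the trivial computation of the Lawson homology of a point.
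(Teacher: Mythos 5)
Your proposal is correct and follows essentially the same route as the paper: apply Theorem \ref{Th1.2} with each $F_i$ a point and then observe that $L_{r-\lambda_i}H_{k-2\lambda_i}(\mathrm{pt})$ is $\Z$ exactly when $k=2\lambda_i$ (using the convention $L_sH_k=H_k$ for $s<0$) and vanishes otherwise. Your extra bookkeeping confirming that $k=2\lambda_i$ together with $k\ge 2r$ forces $r-\lambda_i\le 0$ is a welcome, if minor, elaboration of what the paper leaves implicit.
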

\begin{proof}
By applying  Theorem \ref{Th1.2} to the special case that  all $F_i$ are points, we have
\begin{equation}\label{eq4.0}
 L_rH_k(X)\cong \bigoplus_{i=0}^n L_{r-\lambda_i} H_{k-2\lambda_i}(pt).
\end{equation}
Note that $ L_{r-\lambda_i} H_{k-2\lambda_i}(pt)\neq 0$ if and only if $k=2\lambda_i$.
Moreover, in this case we have  $L_{r-\lambda_i} H_{k-2\lambda_i}(pt)\cong\Z$.
Hence Equation (\ref{eq4.0}) reduces to
\begin{equation*}
 L_rH_k(X)\cong \Z^{n_i}.
\end{equation*}
\end{proof}

Examples of such varieties include: All generalized flag varieties and their products and smooth projective
varieties on which a reductive group actions with isolated fixed points (cf. \cite{Lima-Filho2}).

\begin{example}\label{exm3.2}
 Let $X\subset\P^{2d+1}$ be a split projective quadric of dimension $2d$, defined as the hypersurface in $\P^{2d+1}$ by
$$X=\bigg\{[x_0:\cdots:x_d:y_0\cdots:y_d]\in \P^{2d+1}\bigg|\sum_{i=0}^dx_iy_i=0\bigg\}.$$
Then for $k\geq 2r\geq 0$, we have
\begin{equation}\label{eq3.5}
L_rH_k(X)\cong L_rH_k(\P^d)\oplus L_{r-d}H_{k-2d}(\P^d)=\left\{
\begin{array}{cll}
 \Z^2& \hbox{if $k=2d$}, \\
\Z& \hbox{if $0\leq k\leq 4d$ even but $k\neq 2d$},\\
0& \hbox{otherwise}.
\end{array}
\right.
 \end{equation}

\end{example}
\begin{proof}
 The quadric $X$ admits a $(\C^*)^d=(\C^*)^{d+1}/\C^*$-action given by
$$(\C^*)^d\times X\to X, [x_0:\cdots:x_d:y_0\cdots:y_d]\mapsto [t_0x_0:\cdots:t_d x_d:y_0\cdots:y_d]. $$
For simplicity, we denote $[x_0:\cdots:x_d:y_0\cdots:y_d]$ by $[\textbf{x}:\textbf{y}]$
and $[t_0x_0:\cdots:t_d x_d:y_0\cdots:y_d]$ by $[\textbf{t}\textbf{x}:\textbf{y}]$.

The fixed point set $F$ of this action is $F_1\cup F_2$, where
$$F_1=\{[x_0:\cdots:x_d:y_0\cdots:y_d]|x_0=\cdots=x_d=0\}$$
and
$$F_2=\{[x_0:\cdots:x_d:y_0\cdots:y_d]|y_0=\cdots=y_d=0\}.$$
Both $F_1$ and $F_2$ are isomorphic to $\P^d$.  There is a decomposition of $X=X_1^+\cup X_2^+$ coming from this action,
 where
$X_1^+:=\{[\textbf{x}:\textbf{y}]|\lim_{\textbf{t}\to \vec{0}} [\textbf{t}\textbf{x}:\textbf{y}]\in F_1\}$  and
$X_2^+:=\{[\textbf{x}:\textbf{y}]|\lim_{\textbf{t}\to \vec{0}} [\textbf{t}\textbf{x}:\textbf{y}]\in F_2\}$.
A direct calculation shows that $X_1^+=F_1$ and $X_2^+$ is a locally trivial fibration over $F_2$ with fiber $\C^d$.
 By Theorem \ref{Th1.1}, we have
$$
L_rH_k(X)\cong L_rH_k(\P^d)\oplus L_{r-d}H_{k-2d}(\P^d).
$$
Since $L_rH_k(\P^d)\cong H_k(\P^d,\Z)\cong \Z$ for $0\leq k\leq 2d$ even and $0$ otherwise, we get Equation (\ref{eq3.5}).

\end{proof}

The following example relates to the above one but the projective variety is singular.
\begin{example}\label{exm3.3}
 Let $X\subset\P^{2d+1}$ be a projective variety of dimension $2d$, defined as the hypersurface in $\P^{2d+1}$ by
$$X=\bigg\{[x_0:\cdots:x_d:y_0\cdots:y_d]\in \P^{2d+1}\bigg|\sum_{i=0}^dx_i^my_i=0\bigg\}$$
for integers $m>1$.
Then for $k\geq 2r\geq 0$, we have
\begin{equation}\label{eq3.6}
L_rH_k(X)\cong L_rH_k(\P^d)\oplus L_{r-d}H_{k-2d}(\P^d)=\left\{
\begin{array}{cll}
 \Z^2& \hbox{if $k=2d$}, \\
\Z& \hbox{if $0\leq k\leq 4d$ even but $k\neq 2d$},\\
0& \hbox{otherwise}.
\end{array}
\right.
 \end{equation}
\end{example}
\begin{proof}
 We use  the notations in the proof of Example \ref{exm3.2}.
 The hypersurface $X\subset \P^{2d+1}$ admits a $(\C^*)^d=(\C^*)^{d+1}/\C^*$-action given by
$$(\C^*)^d\times X\to X, (\textbf{t},[\textbf{x}:\textbf{y}])\mapsto [\textbf{t}\textbf{x}:\textbf{y}]. $$
The fixed point set $F$ of this action is $F_1\cup F_2$, where
$$F_1=\{[\textbf{x}:\textbf{y}] |\textbf{x}=\vec{0}\}~and \quad F_2=\{[\textbf{x}:\textbf{y}] |\textbf{y}=\vec{0}\}. $$
A simple  calculation shows that the set of singular points of $X$ is $F_1$. Let $X_1^+$ and $X_2^+$ be the same meaning
as in the proof of Example \ref{exm3.2}. Then $X=X_1^+\cup X_2^+$, where $X_2^+$ is a locally trivial fibration over
$F_2$ with fibers $\C^d$. Although $X$ is singular, both $X_1^+$ and $X_2^+$ are smooth. By Theorem \ref{Th1.2}, we have
$$
L_rH_k(X)\cong L_rH_k(F_1)\oplus L_{r-d}H_{k-2d}(F_2)
\cong L_rH_k(\P^d)\oplus L_{r-d}H_{k-2d}(\P^d).
$$
This completes the calculation in this example.
\end{proof}

The following  is another example of  a singular projective variety which admits a cell decomposition.
\begin{example}
 Let $G$ be a connected reductive group  over $\C$ with Lie algebra $\mathfrak{g}$. Let $\mathcal{B}$ be
the variety of all Borel subalgebras of $\mathfrak{g}$. For each nilponent element  $N\in \mathfrak{g}$, we
set $\mathcal{B}_N:=\{\mathfrak{b}\in \mathcal{B}|N\in \mathfrak{b}\}$. Then we have
$$
L_rH_k(\mathcal{B}_N)\cong H_k(\mathcal{B}_N,\Z)
$$ 
for all $k\geq 2r\geq 0$.
\end{example}
\begin{proof}
 It was shown that $\mathcal{B}_N$ admits a cell-decomposition (cf. \cite{Concini-Lusztig-Procesi}). 
\end{proof}

We end this subsection by  the following  example.
\begin{example}
Let $S$ be a smooth rational projective surface and let $Hilb^d(S)$ be the Hilbert scheme of $d$ points.
Then we have
 $$ L_rH_k(Hilb^d(S))\cong \Z^{c_{k,d}(S)},
$$
where  $c_{k,d}(S)$ is the coefficient of $z^kt^d$ in
the power series expansion
$$
\prod_{k=1}^{\infty}\frac{1}{(1-z^{2k-2}t^k)(1-z^{2k}t^k)^{b_2(S)}(1-z^{2k+2}t^k)}.
$$
\end{example}

\begin{proof}
 There is a $(\C^*)^2$-action on $S$ whose  fixed point set $F_S$ is finite.
 A result of Fogarty says that $Hilb^d(S)$ is a smooth projective variety. Since  $S$
admits a $(\C^*)^2$ action with isolated fixed points, this action  induces a natural $(\C^*)^2$-action on $Hilb^d(S)$,
which also has finite isolated fixed points. To see this, we note first that a fixed point  $P$ in $Hilb^d(S)$ whose support
must be in  $F_S$.  We need to show that  the number of ideals $I_P\subset \C[S]$, invariant under $(\C^*)^2$, is finite.
Since $F_S$ is finite, we only need to show the case that the number of invariant ideals $I\subset \C[S]$ is finite, where
the reduced subscheme structure on $Spec(\C[S]/I)$ is a point $s$ in $F_S$. Note that the affine coordinate ring at $s$ is
$\C[T_1,T_2]$. The $(\C^*)^2$-action on $\C[T_1,T_2]$ is given by $(T_1,T_2)\mapsto (\lambda T_1,\mu T_2)$. An invariant
ideal $I$ is generated by monomials $T_1^pT_2^q$ for some $p,q\geq 0$.
Let $m$ be the maximal ideal such that $Spec(\C[T_1,T_2]/m)$ is  $s$. One sees that
$m=(T_1,T_2)$.

If the dimension of $\C[T_1,T_2]/I$ is less than or equal to $N$, then $I\supset (T_1,T_2)^N$. There are only finite number of
such ideals corresponding to the pairs $(p,q)$ such that $p+q\leq N$. Now $N=d$ is given, so the number of invariant ideals
 $I$ is finite. Therefore there are finite fixed point in $Hilb^d(S)$ under the induced $(\C^*)^2$-action.
 So there is no torsion elements in $L_rH_k(Hilb^d(S))$ and its rank  coincides to that of $H_k(Hilb^d(S))$
for all $k\geq 2r\geq 0$.

To identify $L_rH_k(Hilb^d(S))$, it is enough to compute $b_k(Hilb^d(S))$ for $k$ even since $b_{odd}(Hilb^d(S))=0$.
These numbers has been calculated in
\cite{Cheah}. That is, $b_k(Hilb^d(S))=c_{k,d}(S)$. From the defining generating function for $c_{k,d}(S)$, one observes
that $c_{k,d}(S)=0$ for odd $k$.
Hence $$ L_rH_k(Hilb^d(S))\cong \Z^{c_{k,d}(S)}.$$
\end{proof}

\begin{remark}
The Lawson homology of $Hilb^d(S)$ with \emph{rational} coefficients was computed implicitly in  \cite{Hu-Li}.
The integral homology of the Hilbert scheme of $d$ points on $\P^2$ was computed in \cite{Ellingsrud-Stromme},
where the idea in this example is from.

\end{remark}

\subsection{Fiber bundles with cellular fibers}
Let $E$ be a fiber bundle  over a quasi-projective variety $X$ with cellular fibers $Y$, where
a filtration
$\emptyset=Y_{-1}\subset Y_0\subset Y_1\subset\cdots\subset Y_N=X$
such that $X_i-X_{i-1}$ is isomorphic to $\C^{ \lambda_i} $ for all $i$.

 Then we have isomorphisms  
\begin{equation}\label{eq3.04}
 L_rH_k(E)\simeq \bigoplus_{i=0}^N
 L_{r-\lambda_i}H_{k-2\lambda_i}(X), \quad\forall k\geq 2r\geq 0.
\end{equation}

\begin{proof}[Proof of Equation (\ref{eq3.04})]
 By induction and long localization sequences for Lawson homology, we reduced ourselves to the case 
that $E$ is trivial. We can further reduce to the case that $X$ is projective. In this case, 
Equation (\ref{eq3.04}) follows from Theorem \ref{Th1.2}.
\end{proof}
 
In particular, this includes the Projective Bundle Formula (cf. \cite[Prop. 2.5]{Friedlander-Gabber}), the product
of any projective variety with a cellular variety, and the $d$-fold symmetric product of  a smooth projective algebraic 
curve for $d$ large.

\subsection{Toric varieties}
 In this subsection, we compute Lawson homology groups for toric varieties $X$ of $\dim X=n$.
For background on  toric varieties, the reader is referred to Fulton's book \cite{Fulton}.

If $X$ is a smooth projective toric variety, then $X$ admits a cell decomposition. So the calculation of Lawson homology for $r$-cycles coincides with
that of singular homology with  the corresponding integral coefficients for all $r\geq 0$, as pointed out above.
Explicitly,

\begin{example}
Let $X=X(\Delta)$ be a smooth projective toric variety associated to the fan $\Delta$ and denoted by $d_k$
the number of $k$-dimensional cones in $\Delta$.
 Then
$$
L_rH_k(X)\cong\left\{
\begin{array}{lll}
\Z^{b_{2m}}(X), &\hbox{for $k=2m$ even,}\\
0,& \hbox{for $k$ odd,}
\end{array}\right.
$$
where $b_{2m}(X)=\sum_{i=m}^n(-1)^{i-m}(^i_m)d_{n-i}$ is the $2m$-th Betti number of $X$.
 \end{example}

For general projective toric variety $X$, we can \emph{not} expect that there is an isomorphism between Lawson
homology and the singular homology in $\Z$-coefficient. However, as hinted in \cite{Fulton}, we
have the following result:

\begin{proposition}
 The Lawson homology  $L_rH_k(X)_{\Q}$ with rational coefficients
 of a simplicial toric variety $X $ is isomorphic to
the rational homology $H_k(X,\Q)$ for $k\geq 2r\geq 0$.
\end{proposition}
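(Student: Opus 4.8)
The plan is to reduce the statement to the smooth case via resolution combined with the localization properties of Lawson homology with rational coefficients, exactly as hinted in \cite{Fulton} for singular homology. A simplicial toric variety $X=X(\Delta)$ is an orbifold: it has quotient singularities, so it admits a finite surjective toric morphism (or, more usefully here, a toric resolution) from a smooth toric variety. First I would take a toric resolution $\pi:\widetilde{X}\to X$, where $\widetilde{X}=X(\widetilde{\Delta})$ is smooth projective and $\widetilde{\Delta}$ is obtained from $\Delta$ by subdividing cones; $\pi$ is an isomorphism over the open dense torus and over all strata whose corresponding cones were already smooth. The key point is that, torus-orbit by torus-orbit, $\pi$ restricts to a locally trivial fibration with fibers that are themselves projective toric varieties (in fact products of projective spaces after the standard simplicial subdivision), so the geometry is completely explicit.

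The main tool is the localization long exact sequence for Lawson homology associated to a closed immersion $Z\hookrightarrow Y$ with open complement $U$, together with the fact that for a smooth projective toric variety all Lawson homology groups are free abelian and concentrated in even degrees (the cellular case, established earlier in this section). I would argue by induction on $\dim X$. Stratify $X$ by torus orbits; let $X^{\mathrm{sing}}$ be the union of the singular strata, a closed toric subvariety of strictly smaller dimension, and let $U=X\smallsetminus X^{\mathrm{sing}}$, which is smooth quasi-projective toric and hence cellular (its Lawson homology with $\Q$-coefficients agrees with $H_*(-,\Q)$ by the results quoted above, via Equation (\ref{eq4.0}) and the localization sequence). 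By the inductive hypothesis applied to $X^{\mathrm{sing}}$ (each of whose irreducible components is again a simplicial — indeed lower-dimensional — toric variety), $L_rH_k(X^{\mathrm{sing}})_{\Q}\cong H_k(X^{\mathrm{sing}},\Q)$ for all $k\geq 2r\geq 0$. Now compare the two localization long exact sequences — the one for Lawson homology tensored with $\Q$, and the one for singular homology with $\Q$-coefficients — fitted into a commutative ladder by the natural transformation $L_rH_k(-)\to L_0H_k(-)=H_k(-,\Z)$ (valid since $r\ge 0$; tensor with $\Q$). Two out of every three vertical arrows are isomorphisms by the above, so the Five Lemma forces the third, giving $L_rH_k(X)_{\Q}\cong H_k(X,\Q)$.

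The one genuine obstacle is verifying that the comparison map $L_rH_k(U)_{\Q}\to H_k(U,\Q)$ is an isomorphism for the smooth open toric stratum $U$, which is not projective, so the cellular results of \cite{Lima-Filho2} do not apply verbatim. I would handle this by a secondary induction: $U$ itself is built up from affine-space bundles over tori by the Bialynicki--Birula-type filtration, or more directly one can run the same localization-and-Five-Lemma argument on $U$ using a smooth toric compactification, since the boundary divisors are again smooth projective toric varieties where everything is known. Alternatively — and this is the cleanest route — one avoids $U$ entirely: take the toric resolution $\pi:\widetilde X\to X$ and compare the long exact sequences of the pairs $(\widetilde X, \pi^{-1}(X^{\mathrm{sing}}))$ and $(X,X^{\mathrm{sing}})$; here $\widetilde X$ and $\pi^{-1}(X^{\mathrm{sing}})$ are unions of smooth projective toric varieties, so the cellular case gives the comparison isomorphism on the nose, and one uses that $\pi$ induces an isomorphism $L_rH_k(\widetilde X)_\Q/L_rH_k(\pi^{-1}(X^{\mathrm{sing}}))_\Q \xrightarrow{\sim} L_rH_k(X)_\Q/L_rH_k(X^{\mathrm{sing}})_\Q$ — this last isomorphism being the analog, in Lawson homology with $\Q$-coefficients, of the well-known fact that $\pi_*$ is a rational-homology isomorphism away from the exceptional locus, which follows from the explicit product-of-projective-space fiber structure of $\pi$ together with the Projective Bundle Formula and Equation (\ref{eq3.04}). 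The rest is the Five Lemma and descending induction on $\dim X$, the base case $\dim X=0$ being trivial.
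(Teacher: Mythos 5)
Your strategy is genuinely different from the paper's, but it has a gap that I believe is fatal as written. Every variant you propose ultimately requires the comparison map $L_rH_k(-)_{\Q}\to H_k^{BM}(-,\Q)$ to be an isomorphism on the open toric pieces you localize down to --- the big torus and, more generally, smooth quasi-projective toric strata of the form $(\C^*)^a\times\C^b$. That comparison is false. The paper itself computes, in the proposition following this one, that $L_rH_k((\C^*)^n)\cong\Z^{a_{r,k,n}}$ with $a_{r,k,n}=\binom{n}{k-n}$ \emph{only when} $k\geq r+n$, and $a_{r,k,n}=0$ otherwise; in particular $L_1H_2((\C^*)^2)=0$, whereas $H_2^{BM}((\C^*)^2,\Q)\cong\Q$. (Concretely: in the localization sequence for $(\P^2,D)$ with $D$ the toric boundary triangle, the Lawson sequence terminates at $k=2r$ with $L_1H_2(D)\to L_1H_2(\P^2)\to L_1H_2((\C^*)^2)\to 0$, forcing the last group to vanish, while the Borel--Moore sequence continues into $H_1(D)\cong\Z$ and yields $H_2^{BM}((\C^*)^2)\cong\Z$.) Consequently the vertical arrow of your ladder over the open-stratum term is not an isomorphism in general, and neither proposed repair works: the secondary induction over strata $(\C^*)^a\times\C^b$ hits exactly the failing cases once $r>b$ and $a\geq 2$, and the compactify-and-Five-Lemma argument breaks at the truncation edge $k=2r$ of the Lawson localization sequence. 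The resolution route does not avoid this either: after identifying $L_rH_k(\widetilde X-E)$ with $L_rH_k(X-X^{\mathrm{sing}})$ you must still compare that group with $H_k^{BM}(X-X^{\mathrm{sing}},\Q)$, which is the same unproved (and in general false, for torus-type strata) assertion.

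The paper's proof sidesteps tori entirely: for simplicial $X(\Delta)$ it uses the filtration of \cite[\S5.2]{Fulton}, whose successive differences are quotients $\C^{m}/G$ of affine spaces by finite groups rather than torus orbits. For such a ``rational cell'' both $L_*H_*(\C^m/G)_{\Q}$ and $H_*^{BM}(\C^m/G,\Q)$ are the $G$-invariants of the corresponding groups of $\C^m$ (by \cite[Prop.~3.1]{Hu-Li} and transfer), so the comparison holds stratum by stratum and the Five Lemma closes the induction. If you want to keep an orbit-by-orbit or resolution-based argument, you would first have to establish the comparison isomorphism for the relevant smooth open toric pieces, and the example above shows that this ingredient is simply not available.
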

\begin{proof}
 First we note that for any $X=X(\Delta)$ where $\Delta$ is simplicial, there is a filtration
for $X$, i.e., $\emptyset=X_{-1}\subset X_0\subset\cdots\subset X_N=X$ such that $Y_i:=X_i-X_{i-1}$
is a quotient $\C^{n-k_i}/G_i$ of an affine space by a finite group (cf. \cite[\S5.2]{Fulton}). For such a quotient,
the Lawson homology  and Borel-Moore homology are the spaces invariant by the group:
$$
L_*H_*(\C^m/G)_{\Q}\cong \{L_*H_*(\C^m)_{\Q}\}^{G} \quad (\hbox{cf. \cite[Prop. 3.1]{Hu-Li} })
$$
and
$$
H_*^{BM}(\C^m/G,{\Q})\cong \{H_*^{BM}(\C^m,{\Q})\}^{G},
$$
where $H_*^{BM}(-,\Q)$ denotes the Borel-Moore homology with rational coefficient. Since we only
consider the  Borel-Moore homology for non-compact algebraic set, we simply denote it by $H_*(-,\Q)$.

Note that we have the following commutative diagram of long exact sequences:
{\tiny
$$
\xymatrix{L_rH_{k+1}(Y_i)_{\Q}\ar[r]\ar[d]^{\cong}&L_rH_k(X_{i-1})_{\Q}\ar[r]\ar[d]^{\cong}&
L_rH_k(X_i)_{\Q}\ar[r]\ar[d]&L_rH_k(Y_i)_{\Q}\ar[r]\ar[d]^{\cong}&L_rH_{k-1}(X_{i-1})_{\Q}\ar[d]^{\cong}
\\
H_{k+1}(Y_i,\Q)\ar[r]&H_k(X_{i-1},{\Q})\ar[r]&H_k(X_i,{\Q})\ar[r]&H_k(Y_i,{\Q})\ar[r]&H_{k-1}(X_{i-1},{\Q})
},
$$
}

\noindent
where the first and fourth vertical isomorphisms follows from above arguments, the second and fifth vertical isomorphisms
follows from the induction.

Now the proposition follows from the Five lemma and the induction on $i$.
\end{proof}

The next result  is about an isomorphism between Lawson homology and higher Chow groups for toric varieties.
Recall that (cf. \cite{Bloch}) for each $m\geq 0$, let
$$
\Delta[d]:=\{t\in \C^{d+1}|\sum_{i=0}^{m} t_i=1\}\cong \C^d.
$$
and let $z^l(X,d)$ denote the free abelian group generated by irreducible subvarieties of codimension-$l$ on $X\times \Delta[d]$ which
meets $X\times F$ in proper dimension for each face $F$ of $\Delta[d]$. Using intersection and pull-back of algebraic cycles,
we can define face and degeneracy relations and obtain a simplicial abelian group structure for $z^l(X,d)$. 
Let $|z^l(X,*)|$ be the geometric realization of $z^l(X,*)$.  Then the higher Chow group is defined as
$$
\Ch^l(X,k):=\pi_k(|z^l(X,*)|)
$$
 and set $ \Ch_{l}(X,k):=\Ch^{n-l}(X,k).$  It was shown by Friedlander-Gabber \cite{Friedlander-Gabber} that there
is a natural map from the higher Chow groups to Lawson homology groups
$$
FM:\Ch_{r}(X,m)\to L_{r}H_{2r+k}(X)
$$
for all $r, m\geq 0$.

\begin{theorem}
Let $X$ be an arbitrary toric variety. The higher Chow group
$\Ch_r(X,m)$ of $X$ is isomorphic to the Lawson homology group $L_rH_{2r+m}(X)$
of $X$. In particular, the algebraic equivalence coincide with the rational equivalence
for projective toric varieties.
\end{theorem}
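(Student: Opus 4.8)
The plan is to prove that the Friedlander--Gabber natural transformation $FM\colon\Ch_r(X,m)\to L_rH_{2r+m}(X)$ is itself an isomorphism, by an induction along the torus orbit stratification of $X$. Recall from the orbit--cone correspondence (\cite{Fulton}) that a toric variety $X=X(\Delta)$ of dimension $n$ is the disjoint union of its finitely many torus orbits $O_\sigma\cong(\C^*)^{\,n-\dim\sigma}$, and that $\overline{O_\sigma}$ is the union of those $O_\tau$ of dimension at most $\dim O_\sigma$; hence for each $j$ the union $X_j$ of all orbits of dimension $\le j$ is a closed subvariety. We obtain a filtration $\emptyset=X_{-1}\subset X_0\subset\cdots\subset X_n=X$ by closed subvarieties whose successive complements $Y_j:=X_j\setminus X_{j-1}$ are finite disjoint unions of algebraic tori $(\C^*)^j$.

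Next I would assemble a ladder of localization long exact sequences, just as in the proof of the Proposition on simplicial toric varieties above. Higher Chow groups satisfy Bloch's localization theorem (\cite{Bloch}), Borel--Moore Lawson homology satisfies its own localization sequence, and $FM$ is compatible with them (\cite{Friedlander-Gabber}); thus for each $i$ there is a commutative ladder
$$
\cdots\to\Ch_r(X_{i-1},m)\to\Ch_r(X_i,m)\to\Ch_r(Y_i,m)\to\Ch_r(X_{i-1},m-1)\to\cdots
$$
mapping via $FM$ to the corresponding Lawson homology localization sequence under the substitution $k=2r+m$. Since both functors send finite disjoint unions to direct sums, an induction on $i$ with the five lemma reduces the theorem to showing that $FM\colon\Ch_r((\C^*)^d,m)\to L_rH_{2r+m}((\C^*)^d)$ is an isomorphism for every $d\ge 0$.

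That torus base case I would prove by a second induction, on $d$. For $d=0$ both sides equal $\Z$ concentrated in $r=m=0$, where $FM$ is the identity. For the inductive step, apply the localization ladder to $(\C^*)^{d-1}\times\G_m\subset(\C^*)^{d-1}\times\A^1\supset(\C^*)^{d-1}\times\{0\}$: homotopy invariance of higher Chow groups (\cite{Bloch}) and of Lawson homology (\cite[Prop.~2.3]{Friedlander-Gabber}) identifies the groups of $(\C^*)^{d-1}\times\A^1$ with those of $(\C^*)^{d-1}$ up to a fixed degree shift, compatibly with $FM$, and the five lemma together with the inductive hypothesis applied to $(\C^*)^{d-1}$ and to $(\C^*)^{d-1}\times\{0\}\cong(\C^*)^{d-1}$ closes the induction. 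The rational version of this base case is \cite[Prop.~3.1]{Hu-Li}.

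The last assertion is the case $m=0$ with $X$ projective: there $\Ch_r(X,0)$ is the Chow group $CH_r(X)$ of $r$-cycles modulo rational equivalence, $L_rH_{2r}(X)=\pi_0\,\cZ_r(X)$ is the group of $r$-cycles modulo algebraic equivalence, and $FM$ in this bidegree is precisely the canonical surjection $CH_r(X)\to L_rH_{2r}(X)$; the theorem asserts it is injective, so the two equivalence relations coincide on a projective toric variety. The genuinely non-formal part, and hence the main obstacle, is to make sure the two deep inputs are available with \emph{integral} coefficients and in a form compatible with $FM$: Bloch's localization theorem for higher Chow groups, and the integral torus base case (the cited computation of \cite{Hu-Li} being only rational). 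Everything else is the five-lemma bookkeeping already carried out above for simplicial toric varieties.
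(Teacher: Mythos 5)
Your proposal follows essentially the same route as the paper: the same filtration of $X$ by unions of torus orbits of bounded dimension, the same ladder of localization sequences compared via the Friedlander--Gabber map $FM$ with the five lemma, and the same base case $FM\colon\Ch_r((\C^*)^d,m)\to L_rH_{2r+m}((\C^*)^d)$ established by induction on $d$ using homotopy invariance of both theories. Your explicit remark that the argument needs Bloch's localization theorem and the compatibility of $FM$ with the boundary maps \emph{integrally} is a fair flagging of the point the paper leaves implicit, but the proof itself is the paper's proof.
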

\begin{proof}
 First we show that
\begin{equation}\label{eq4}
\Ch_r((\C^*)^N,k-2r)\cong L_rH_{k}((\C^*)^N)
\end{equation}
for all $k\geq 2r\geq 0$ and $N\geq0$. This statement follows from the induction
and the fact that $L_rH_{k}(X\times \C^*)\cong \Ch_r(X\times \C^*,k-2r)$ for all
$k\geq 2r\geq 0$ if
$L_rH_{k}(X)\cong \Ch_r(X,k-2r)$ for all $k\geq 2r\geq 0$. To see the fact, one
notes that there exists a commutative diagram
{\tiny
\begin{equation}
\xymatrix{\Ch_r(X,m)\ar[r]\ar[d]^{\cong}&\Ch_r(X\times\C,m)\ar[r]\ar[d]^{\cong}&\Ch_r(X\times\C^*,m)\ar[r]\ar[d]&
\Ch_r(X,m-1)\ar[r]\ar[d]^{\cong}&\Ch_r(X\times\C,m-1)\ar[d]^{\cong}
\\
L_rH_{k}(X)\ar[r]&L_rH_k(X\times \C)\ar[r]&L_rH_k(X\times\C^*)\ar[r]&L_rH_{k-1}(X)\ar[r]&L_rH_{k-1}(X\times\C)
}
\end{equation}
}

\noindent of long exact sequences from higher Chow groups to Lawson homology groups. By assumption, the first and fourth
 vertical maps are isomorphisms, where $m=k-2r$. By the homotopy invariant property of higher Chow groups
 (cf. \cite{Bloch}) and Lawson homology
groups (cf. \cite{Lawson1} or \cite[Prop. 2.3]{Friedlander-Gabber}) and the assumption, we get isomorphisms for the second and
fifth vertical maps. Hence by the Five lemma,  the middle vertical map is an isomorphism. 
This completes the proof of the statement in Equation (\ref{eq4})
for all $k\geq 2r\geq 0$ and $N\geq0$.

Note that for any toric variety $X=X(\Delta)$, there is a filtration $X=X_n\supset X_{n-1}\supset\cdots\supset X_{-1}=\emptyset$
by closed algebraic subsets such that $U_i:=X_i-X_{i-1}$ is the disjoint union of orbits $O_{\sigma}$,
 where $\sigma$ runs over the cones of dimension $n-i$. Since an orbit $O_{\sigma}$ is isomorphic 
 to $(\C^*)^{n_{\sigma}}$ for some integer $n_{\sigma}$. Therefore the proposition 
 follows from  induction on $i$ and the commutative diagrams of long exact sequences
{\tiny
\begin{equation}
\xymatrix{\Ch_r(X_i,m+1)\ar[r]\ar[d]^{\cong}&\Ch_r(X_{i-1},m)\ar[r]\ar[d]^{\cong}&\Ch_r(X_i,m)\ar[r]\ar[d]&\Ch_r(U_i,m)\ar[r]\ar[d]^{\cong}&
\Ch_r(X_{i-1},m-1)\ar[d]^{\cong}
\\
L_rH_{k+1}(U_i)\ar[r]&L_rH_{k}(X_{i-1})\ar[r]&L_rH_k(X_i)\ar[r]&L_rH_k(U_i)\ar[r]&L_rH_{k-1}(X_{i-1})
}
\end{equation}
}
where $m=k-2r$. The first and fourth vertical isomorphisms follows from Equation (\ref{eq4}),
 while the second and fifth vertical isomorphisms are the inductive assumptions. By the Five lemma, we get the isomorphism of the middle one.
\end{proof}

\begin{proposition} For any integer $k\geq 2r\geq0$ and $n\neq0$, we have the following formula
$$L_rH_k((\C^*)^n)=  \Z^{a_{r,k,n}},
$$ where $ a_{r,k,n}:=(^{~n}_{k-n})$ if $k\geq r+n$ and $0$ otherwise.
\end{proposition}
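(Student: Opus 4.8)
The plan is to argue by induction on $n\geq 1$, using the localization long exact sequence in Lawson homology together with the homotopy invariance isomorphism $L_rH_k(Y\times\A^1)\cong L_{r-1}H_{k-2}(Y)$ (\cite[Prop.~2.3]{Friedlander-Gabber}) --- the two tools already used in the toric comparison above. For the base case $n=1$, apply the localization sequence to the pair $\{0\}\subset\C$ with open complement $\C^*$; since $L_rH_*(\C)\cong L_{r-1}H_{*-2}(pt)$ and $L_rH_*(pt)$ is known explicitly, a short diagram chase gives $L_rH_k(\C^*)\cong\Z$ precisely for $(r,k)\in\{(0,1),(0,2),(1,2)\}$ and $0$ otherwise in the range $k\geq 2r\geq 0$, which is $\Z^{a_{r,k,1}}$.

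For the inductive step, write $(\C^*)^n=\bigl((\C^*)^{n-1}\times\A^1\bigr)\setminus\bigl((\C^*)^{n-1}\times\{0\}\bigr)$ and take the localization sequence of this open--closed pair. Rewriting the terms $L_*H_*\bigl((\C^*)^{n-1}\times\A^1\bigr)$ by homotopy invariance, it becomes
\[
\cdots\to L_rH_k\bigl((\C^*)^{n-1}\bigr)\xrightarrow{\iota_*} L_{r-1}H_{k-2}\bigl((\C^*)^{n-1}\bigr)\to L_rH_k\bigl((\C^*)^n\bigr)\to L_rH_{k-1}\bigl((\C^*)^{n-1}\bigr)\xrightarrow{\iota_*}\cdots,
\]
where $\iota$ denotes the zero-section inclusion $(\C^*)^{n-1}\hookrightarrow(\C^*)^{n-1}\times\A^1$.

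The key point --- and the main obstacle --- is the vanishing $\iota_*=0$. To establish it, compactify the $\A^1$-factor: $\iota$ factors as the zero-section of the trivial $\P^1$-bundle $q:(\C^*)^{n-1}\times\P^1\to(\C^*)^{n-1}$ followed by restriction to $(\C^*)^{n-1}\times\A^1$, so $\iota_*$ equals the composite of the push-forward along the $0$-section of $q$ with the restriction map $L_rH_*\bigl((\C^*)^{n-1}\times\P^1\bigr)\to L_rH_*\bigl((\C^*)^{n-1}\times\A^1\bigr)$. Now the $0$-section and the $\infty$-section of $q$ are algebraically equivalent correspondences (joined by the family of constant sections parametrized by $\P^1$), hence induce the same push-forward on Lawson homology; and the $\infty$-section factors through the closed subvariety $(\C^*)^{n-1}\times\{\infty\}$ removed in passing to $(\C^*)^{n-1}\times\A^1$, so its composite with the restriction map vanishes by exactness of the localization sequence of the pair $(\C^*)^{n-1}\times\{\infty\}\subset(\C^*)^{n-1}\times\P^1$. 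Therefore $\iota_*=0$, and the displayed sequence breaks into short exact sequences
\[
0\to L_{r-1}H_{k-2}\bigl((\C^*)^{n-1}\bigr)\to L_rH_k\bigl((\C^*)^n\bigr)\to L_rH_{k-1}\bigl((\C^*)^{n-1}\bigr)\to 0,
\]
which split because, by induction, the right-hand term is a free abelian group (with the convention $L_rH_m=0$ for $m<2r$).

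It remains to insert the inductive hypothesis $L_{r'}H_{k'}\bigl((\C^*)^{n-1}\bigr)\cong\Z^{a_{r',k',n-1}}$, read with the same formula also for $r'<0$, where it records the Borel-Moore Betti numbers $\binom{n-1}{k'-(n-1)}$ of $(\C^*)^{n-1}$, and to verify the identity $a_{r,k,n}=a_{r-1,k-2,n-1}+a_{r,k-1,n-1}$. Each of the threshold conditions $k-2\geq(r-1)+(n-1)$ and $(k-1)\geq r+(n-1)$ is equivalent to $k\geq r+n$; when this holds the identity is exactly Pascal's rule $\binom{n}{k-n}=\binom{n-1}{k-n-1}+\binom{n-1}{k-n}$, and when it fails all three terms are $0$, so the edge cases (e.g.\ $r=0$, where one may instead quote $L_0H_*=H_*^{BM}$ directly, or $k=2r$) present no difficulty. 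Everything past the vanishing $\iota_*=0$ is thus pure bookkeeping of indices and Pascal's triangle; the vanishing itself, i.e.\ the comparison of the two sections of the $\P^1$-bundle, is where the real content lies.
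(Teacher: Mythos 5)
Your argument is correct and is essentially the paper's own proof: the paper establishes the splitting $L_rH_k(X\times\C^*)\cong L_{r-1}H_{k-2}(X)\oplus L_rH_{k-1}(X)$ via the localization sequence for $(X\times\C, X\times\{0\})$, kills the zero-section push-forward by comparing the $0$- and $\infty$-sections of $X\times\P^1$ exactly as you do, and then inducts on $n$. The only cosmetic differences are that the paper splits the short exact sequence with an explicit section (intersecting with $X\times\{0\}$) rather than by freeness of the quotient, and leaves the Pascal-rule bookkeeping implicit.
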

\begin{proof}
 First we show that for any projective variety $X$, we have the following isomorphism:
\begin{equation}\label{eq7}
L_rH_k(X\times\C^*)\cong L_{r-1}H_{k-2}(X)\oplus L_rH_{k-1}(X).
\end{equation}
To see this, note that the pair $(X\times \C, X\times \{0\})$, we have
the long exact sequence of Lawson homology:
\begin{equation}\label{eq8}
...\stackrel{\partial}{\to}L_rH_{k}(X)\stackrel{i_{*}}{\to} L_rH_k(X\times \C)\stackrel{Res}{\longrightarrow} L_rH_k(X\times \C^*)\stackrel{\partial}{\to} L_rH_{k-1}(X)\to...
\end{equation}
where $i:X=X\times\{0\}\to X\times\C$ is the inclusion, $Res$ is restriction map and $\partial$ is the boundary map.

The long exact sequence of Lawson homology for the pair $(X\times \P^1, X\times \{0\})$ is
$$
...\stackrel{\partial}{\to}L_rH_{k}(X)\stackrel{i_{\infty*}}{\to} L_rH_k(X\times \P^1)\stackrel{Res}
{\longrightarrow} L_rH_k(X\times \C)\stackrel{\partial}{\to} L_rH_{k-1}(X)\to...
$$ where $i_{\infty}:X=X\times \{\infty\}\to X\times\P^1$ is the inclusion.

From the $\C^1$-homotopy invariance of Lawson homology, one gets $i_{0*}=i_{\infty*}:L_pH_k(X)\to L_pH_k(X\times\P^1)$,
where $i_0:X=X\times \{0\}\to X\times\P^1$ is the inclusion. From the definition of $i$ and $i_0$, we have
$i_*=Res\circ i_{0*}$, where $Res:L_rH_k(X\times \P^1)\to L_rH_k(X\times \C)$ is the restriction map. Hence we obtain
\begin{equation*}
i_*=Res\circ i_{0*}=Res\circ i_{\infty*}=0.
\end{equation*}
Therefore, Equation (\ref{eq8}) is broken into short exact sequences
$$
0{\to} L_rH_k(X\times \C)\stackrel{Res}{\longrightarrow} L_rH_k(X\times \C^*)\stackrel{\partial}{\to} L_rH_{k-1}(X)\to 0.
$$
This sequence splits since  the map $\cZ_r(X\times \C^*)=\cZ_r(X\times \C)/\cZ_r(X\times \{0\})\to  
\cZ_{r-1}(X)\simeq \cZ_r(X\times \C)$ given by $c\mapsto c\cap (X\times\{0\})$ gives a section of the projection
$\cZ_r(X\times \C)\to \cZ_r(X\times \C)/\cZ_r(X\times \{0\})$. So we get Equation (\ref{eq7}).
The proof of the proposition is completed by induction on $n$.
\end{proof}

We set $\chi_p(X):=\sum_{i\geq 2p}(-1)^k\rank (L_pH_k(X))$ whenever $L_pH_k(X)$ are finitely generated and vanishes
for $k$ large. In this situation, $\chi_p(X)$ is well-defined. The proposition has the following corollaries.
\begin{corollary}\label{cor13}
 $\chi_p((\C^*)^n)$ is well-defined and $\chi_p((\C^*)^n)=\sum_{i=p}^n(-1)^{n+i}(^n_i)$. In particular, 
$\chi_p((\C^*)^n)\neq 0$ for $1\leq p\leq n$.
\end{corollary}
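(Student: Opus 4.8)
The plan is to read off the ranks of the groups $L_pH_k((\C^*)^n)$ from the preceding proposition and then collapse the resulting alternating sum by an elementary binomial identity. First I would check that $\chi_p((\C^*)^n)$ is well-defined: by the proposition each $L_pH_k((\C^*)^n)$ is free abelian of finite rank $a_{p,k,n}$, where $a_{p,k,n}=\binom{n}{k-n}$ if $k\geq p+n$ and $0$ otherwise, and since $\binom{n}{k-n}$ vanishes whenever $k>2n$, the group $L_pH_k((\C^*)^n)$ is zero for all but finitely many $k$. Hence $\chi_p((\C^*)^n)=\sum_{k\geq 2p}(-1)^k a_{p,k,n}$ is a finite sum.

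Next I would pin down the range of nonzero terms. The rank $a_{p,k,n}$ is nonzero exactly when $p+n\leq k\leq 2n$. If $p>n$ this range is empty (it forces $k>2n$), so $\chi_p((\C^*)^n)=0$, in agreement with the closed formula below. If $p\leq n$, then $p+n\geq 2p$, so the constraint $k\geq 2p$ coming from the definition of $\chi_p$ is automatically satisfied on that range, and therefore
\[
\chi_p((\C^*)^n)=\sum_{k=p+n}^{2n}(-1)^k\binom{n}{k-n}=\sum_{i=p}^{n}(-1)^{n+i}\binom{n}{i},
\]
where the second equality is the substitution $i=k-n$, using $(-1)^k=(-1)^{n+i}$. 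This is the asserted formula.

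Finally, for the non-vanishing assertion I would reindex by $j=n-i$, turning the sum into $\sum_{j=0}^{n-p}(-1)^j\binom{n}{j}$, and then invoke the standard partial-sum identity $\sum_{j=0}^{m}(-1)^j\binom{n}{j}=(-1)^m\binom{n-1}{m}$ (proved by induction on $m$ via Pascal's rule, the intermediate terms telescoping). This gives $\chi_p((\C^*)^n)=(-1)^{n-p}\binom{n-1}{n-p}$, which equals $\pm\binom{n-1}{n-p}\neq 0$ precisely when $0\leq n-p\leq n-1$, i.e. for $1\leq p\leq n$. I do not expect any genuine obstacle here: the only input beyond the preceding proposition is this partial-sum binomial identity, and the one thing to be careful about is keeping track of the summation bounds, in particular the degenerate case $p>n$ where the sum is empty.
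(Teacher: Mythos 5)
Your proposal is correct and follows exactly the route the paper intends: the corollary is stated as an immediate consequence of the preceding proposition (the paper omits the verification), and your reading off of the ranks $a_{p,k,n}=\binom{n}{k-n}$ for $p+n\le k\le 2n$, the reindexing $i=k-n$, and the partial-sum identity $\sum_{j=0}^{m}(-1)^j\binom{n}{j}=(-1)^m\binom{n-1}{m}$ supply precisely the missing computation. The closed form $\chi_p((\C^*)^n)=(-1)^{n-p}\binom{n-1}{n-p}$ you obtain also cleanly explains both the nonvanishing for $1\le p\le n$ and the vanishing at $p=0$.
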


\begin{corollary}
 Let $X=X(\Delta)$ be an arbitrary toric variety of dimension $n$. Then
$$\chi_p(X)=\sum_{i=0}^{n-p} \sum_{j=p}^{n-i}(-1)^{n-i+j}d_i\cdot(^{n-i}_{~j}),$$
where $d_i$ is the number of  $i$-dimensional cones in $\Delta$.
\end{corollary}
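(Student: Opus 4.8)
The plan is to express $\chi_p(X)$ as a sum of contributions $\chi_p((\C^*)^m)$ coming from the torus orbits, using the orbit stratification of $X=X(\Delta)$ together with additivity of the Euler characteristic along the localization sequences, and then to invoke the formula of Corollary \ref{cor13}.

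First I would check that $\chi_p(X)$ is well-defined, i.e. that each $L_pH_k(X)$ is finitely generated and vanishes for $k$ large. Recall the filtration $\emptyset=X_{-1}\subset X_0\subset\cdots\subset X_n=X$ by closed algebraic subsets with $U_i:=X_i-X_{i-1}$ a disjoint union of the orbits $O_\sigma$, where $\sigma$ runs over the cones of dimension $n-i$; since $\dim O_\sigma=n-\dim\sigma=i$ and there are $d_{n-i}$ such cones, $U_i$ is a disjoint union of $d_{n-i}$ copies of $(\C^*)^i$. By the Proposition just proved, $L_pH_k((\C^*)^i)\cong\Z^{a_{p,k,i}}$ is a finitely generated free abelian group that vanishes for $k>2i$, and since $\cZ_p$ sends a disjoint union to a product, the same holds for $L_pH_k(U_i)$. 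Running an induction on $i$ through the localization long exact sequences
$$\cdots\to L_pH_k(X_{i-1})\to L_pH_k(X_i)\to L_pH_k(U_i)\xrightarrow{\partial}L_pH_{k-1}(X_{i-1})\to\cdots$$
then shows every $L_pH_k(X_i)$ is finitely generated and zero for $k\gg0$; in particular $\chi_p(X_i)$, and so $\chi_p(X)$, is well-defined.

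Next I would use additivity of the Euler characteristic: the alternating sum of ranks along a bounded long exact sequence of finitely generated abelian groups vanishes, so the displayed sequence gives $\chi_p(X_i)=\chi_p(X_{i-1})+\chi_p(U_i)$. Iterating yields $\chi_p(X)=\sum_{i=0}^n\chi_p(U_i)$, and since $\chi_p$ is additive over disjoint unions (as $\cZ_p$ is multiplicative over connected components) and $U_i$ consists of $d_{n-i}$ copies of $(\C^*)^i$, we get $\chi_p(U_i)=d_{n-i}\,\chi_p((\C^*)^i)$. Inserting the formula of Corollary \ref{cor13} gives
$$\chi_p(X)=\sum_{i=0}^n d_{n-i}\,\chi_p((\C^*)^i)=\sum_{i=0}^n d_{n-i}\sum_{j=p}^{i}(-1)^{i+j}\binom{i}{j}.$$

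Finally I would reindex: replacing $i$ by $n-i$ turns the right-hand side into $\sum_{i=0}^{n}d_i\sum_{j=p}^{n-i}(-1)^{n-i+j}\binom{n-i}{j}$, and since the inner sum is empty whenever $n-i<p$, i.e. $i>n-p$, the outer sum may be truncated at $i=n-p$, which is exactly the asserted formula. (For $i=0$ one uses $(\C^*)^0=\{pt\}$, for which $\chi_p(pt)=1$ if $p=0$ and $0$ otherwise, matching the $n=0$ instance of Corollary \ref{cor13}.) The only delicate point — hence the main obstacle — is the well-definedness step: one must genuinely verify both finite generation and eventual vanishing of the $L_pH_k(X_i)$ so that the alternating sums are finite and the additivity along the long exact sequences is legitimate; everything afterwards is bookkeeping with binomial coefficients.
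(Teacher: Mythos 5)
Your argument is essentially the paper's own proof: stratify $X$ by the torus orbits, use additivity of $\chi_p$ along the localization long exact sequences to get $\chi_p(X)=\sum_i \chi_p(U_i)$ with $U_i$ a disjoint union of tori, and then substitute the formula for $\chi_p((\C^*)^m)$ from Corollary \ref{cor13}. Your version is in fact slightly more careful than the paper's (you justify well-definedness and keep the indexing of orbit dimensions versus cone dimensions consistent before reindexing), but the route is the same.
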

\begin{proof} Note that there is a filtration $X=X_n\supset X_{n-1}\supset\cdots\supset X_{-1}=\emptyset$
by closed algebraic subsets such that $U_i:=X_i-X_{i-1}$ is the disjoint union of orbits $O_{\sigma}$, 
where $\sigma$ runs over the cones of dimension $n-i$. By the long exact sequence of Lawson homology for $(X_i,X_{i-1})$, we get
\begin{equation}\label{eq9}
\chi_p(X_i)=\chi(X_{i-1})+\chi_p(U_i).
\end{equation}
Since $U_i$ is the disjoint union of orbits of $O_{\sigma}$, each $O_{\sigma}$ is isomorphic to $(\C^*)^{n-i}$. Hence
$\chi_p(U_i)=d_i\cdot \chi_p((\C^*)^{n-i})$.  By taking the sum of  Equation (\ref{eq9}) for $i$ from $1$ to $n$, we get
 $$\chi_p(X)=\sum_{i=0}^{n-p}d_i\cdot\chi_p((\C^*)^{n-i}).$$
Now we complete the proof of the corollary by applying Corollary \ref{cor13}.
\end{proof}

Note that if $p=0$, $\chi_p(X)$ coincides with the Euler number of $X$. In this case, $\chi_0(X)=d_n$.

\subsection{Symmetric products of Homogeneous varieties}
Let $X$ be a projective variety  and denoted by $\sp^d X$  the $d$-th symmetric product of $X$.

\begin{proposition}\label{prop16}
If $X$ is a  complex projective variety which admits a cell decomposition, then we have isomorphisms
$$
L_rH_k(\sp^d X)_{\Q}\cong H_k(\sp^d X,\Q)
$$
for any $k\geq 2r\geq 0$.
\end{proposition}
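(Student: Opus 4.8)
The plan is to reduce the statement to the situation already treated in the proof of the preceding proposition on simplicial toric varieties: a projective variety carrying a filtration by closed subsets whose successive quotients are of the form $\C^m/G$ with $G$ a finite group acting linearly on $\C^m$. Since $\sp^dX=X^d/S_d$, the essential task will be to build such a filtration on $\sp^dX$ out of the cell decomposition of $X$; once it is in place, the rest of the argument is the five-lemma comparison between rational Lawson homology and rational Borel--Moore homology used there, applied verbatim.

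First I would construct the filtration. Fix the cell filtration $\emptyset=X_{-1}\subset X_0\subset\cdots\subset X_N=X$ with $W_i:=X_i-X_{i-1}\cong\C^{\lambda_i}$, and let $F^{(s)}\subset X^d$ be the union of all product cells $W_{i_1}\times\cdots\times W_{i_d}$ with $i_1+\cdots+i_d\le s$. Since $\overline{W_{i_1}\times\cdots\times W_{i_d}}\subset X_{i_1}\times\cdots\times X_{i_d}$, each $F^{(s)}$ is closed, and $S_d$ preserves the quantity $i_1+\cdots+i_d$, so the $F^{(s)}$ are $S_d$-stable; within a fixed layer the product cells carry no closure relations among themselves, so one may refine to a filtration by $S_d$-stable closed subsets adjoining one $S_d$-orbit of product cells at a time. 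Passing to the quotient by $S_d$ produces a filtration $\emptyset=Z_{-1}\subset Z_0\subset\cdots\subset Z_M=\sp^dX$ by closed subvarieties whose locally closed pieces $Y_i:=Z_i-Z_{i-1}$ are the quotients of product cells by the stabilizer of the corresponding multi-index, a product $\prod_\alpha S_{m_\alpha}$ of symmetric groups ($m_\alpha$ being the multiplicity of the cell $W_\alpha$). Thus
\begin{equation}\label{eqsp}
Y_i\;\cong\;\prod_\alpha \sp^{m_\alpha}\!\big(\C^{\lambda_\alpha}\big)\;\cong\;\prod_\alpha \C^{m_\alpha\lambda_\alpha}\big/S_{m_\alpha}\;\cong\;\C^{N_i}\big/G_i ,
\end{equation}
with $N_i=\sum_\alpha m_\alpha\lambda_\alpha$ and $G_i=\prod_\alpha S_{m_\alpha}$ acting linearly.

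With this filtration I would run the induction on $i$, starting from $Z_{-1}=\emptyset$. For each $i$ one has a commutative ladder whose top row is the Lawson-homology localization sequence of $(Z_i,Z_{i-1})$ tensored with $\Q$, whose bottom row is the corresponding rational Borel--Moore localization sequence (which is ordinary homology on the projective terms $Z_i,Z_{i-1}$), and whose vertical arrows are the natural cycle maps. The arrows attached to the piece $Y_i$ are isomorphisms: by \eqref{eqsp} and \cite[Prop. 3.1]{Hu-Li} one has $L_rH_k(Y_i)_\Q\cong\{L_rH_k(\C^{N_i})_\Q\}^{G_i}$ and likewise $H_k^{BM}(Y_i,\Q)\cong\{H_k^{BM}(\C^{N_i},\Q)\}^{G_i}$, while the cycle map $L_rH_k(\C^{N_i})_\Q\to H_k^{BM}(\C^{N_i},\Q)$ is a $G_i$-equivariant isomorphism for all $k\geq 2r\geq 0$ --- indeed, by the affine-bundle isomorphism of \cite{Friedlander-Gabber} (or by localization for $\C^{N_i}=\P^{N_i}-\P^{N_i-1}$ together with the cellularity of projective space) both sides are $\Q$ when $k=2N_i$ and vanish otherwise, the map being the fundamental class --- and taking $G_i$-invariants, which is exact over $\Q$, yields the claim for $Y_i$. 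The arrows attached to $Z_{i-1}$ are isomorphisms by the inductive hypothesis, so the Five Lemma forces the arrow for $Z_i$ to be an isomorphism. Taking $i=M$ gives $L_rH_k(\sp^dX)_\Q\cong H_k(\sp^dX,\Q)$ for all $k\geq 2r\geq 0$.

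The hard part will be the filtration construction: verifying cleanly that the product-cell filtration of $X^d$ descends to a filtration of $\sp^dX$ by closed subvarieties whose graded pieces are genuinely of the form $\C^m/(\textrm{finite group acting linearly})$, as asserted in \eqref{eqsp}. This is essentially a bookkeeping argument about multisets of cells, but it is exactly where the hypothesis ``$X$ admits a cell decomposition'' enters; a minor secondary point is the naturality of the cycle map under the finite quotient $\C^{N_i}\to\C^{N_i}/G_i$ needed to identify it with the $G_i$-invariants of the cycle map on $\C^{N_i}$. Everything after that reproduces the proof of the simplicial toric case.
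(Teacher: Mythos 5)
Your proof is correct, but it takes a genuinely different and considerably longer route than the paper's. The paper never stratifies $\sp^d X$ at all: it notes that $X^{\times d}$ is again cellular (cf.\ \cite[\S 5]{Lima-Filho2}), hence $L_rH_k(X^{\times d})\cong H_k(X^{\times d},\Z)$ integrally, and then applies the finite-quotient invariance $L_rH_k(\sp^d X)_{\Q}\cong\{L_rH_k(X^{\times d})_{\Q}\}^{\Sigma_d}$ of \cite[Prop.~3.1]{Hu-Li}, together with its classical counterpart $H_k(\sp^d X,\Q)\cong\{H_k(X^{\times d},\Q)\}^{\Sigma_d}$, directly to the global quotient map $X^{\times d}\to \sp^d X$; taking $\Sigma_d$-invariants of the isomorphism on $X^{\times d}$ finishes the proof in two lines. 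You instead invoke the same invariance result only locally, stratum by stratum, after constructing an explicit filtration of $\sp^d X$ with graded pieces $\C^{N_i}/G_i$, and then rerun the localization/five-lemma induction from the simplicial toric proposition. Both routes rest on the same key input (\cite[Prop.~3.1]{Hu-Li}); yours produces as a byproduct a cell-like stratification of $\sp^d X$ and makes the parallel with the toric case explicit, at the price of exactly the bookkeeping you flag as delicate (descent of the product-cell filtration to the quotient, identification of the strata as linear finite quotients of affine space, and compatibility of the cycle map with the quotient), all of which does go through. The paper's route buys brevity and an integral intermediate statement on $X^{\times d}$ rather than only rational statements on affine strata.
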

\begin{proof}
Since  $X$ admits a cell decomposition, $X^{\times d}$ also admits a cell decomposition (cf. \cite[\S5]{Lima-Filho2}) ,
where $X^{\times d}$ is  $d$-fold   self-product of $X$. So we get
$L_rH_k(X^{\times d})\cong H_k(X^{\times d})$.

 Since both Lawson homology  and the  singular homology
are the subspaces invariant by the symmetric group $\Sigma_d$:
 $$L_rH_k(\sp^d X)_{\Q}\cong \{L_rH_k( X^{\times d})_{\Q}\}^{\Sigma_d}\quad\hbox{(cf. \cite[Prop. 3.1]{Hu-Li})}
$$
 and
 $$H_k(\sp^d X,\Q)\cong \{H_k( X^{\times d},{\Q})\}^{\Sigma_d},$$
 we obtain  isomorphisms in the proposition.
\end{proof}

In the case of Proposition \ref{prop16}, the dimension of $\Q$-vector space $L_rH_k(\sp^d X)_{\Q}$ is given by MacDonald
formula \cite{Macdonald}.

\begin{remark}
 The method of the computation for  Lawson homology groups of  the examples above also works for the higher Chow groups.
\end{remark}

\end{document}